\def\blfootnote{\xdef\@thefnmark{}\@footnotetext}
\newcommand\ccnote{
    \blfootnote{\copyright\,\, Otis Chodosh, Christos Mantoulidis, and Felix Schulze}
    \blfootnote{\ccLogo\, \ccAttribution\,\, Licensed under a \href{https://creativecommons.org/licenses/by/4.0/}{Creative Commons Attribution License (CC-BY)}.}
}
\numberwithin{equation}{section}
\renewcommand{\leq}{\leqslant}
\renewcommand{\geq}{\geqslant}
\renewcommand{\mathbb}{\varmathbb}
\newtheorem{theorem}{Theorem}[section]
\newtheorem{lemma}[theorem]{Lemma}
\newtheorem{corollary}[theorem]{Corollary}
\newtheorem{definition}[theorem]{Definition}
\newtheorem{remark}[theorem]{Remark}
\newtheorem{claim}[theorem]{Claim}
\newcommand{\NN}{\mathbb{N}}
\newcommand{\RR}{\mathbb{R}}
\newcommand{\cC}{\mathcal C}
\newcommand{\cH}{\mathcal H}
\newcommand{\cR}{\mathcal R}
\newcommand{\cS}{\mathcal S}
\newcommand{\bx}{\mathbf{x}}
\newcommand{\by}{\mathbf{y}}
\newcommand{\bOh}{\mathbf{0}}
\newcommand{\eps}{\varepsilon}
\DeclareMathOperator{\sing}{sing}
\DeclareMathOperator{\reg}{reg}
\DeclareMathOperator{\spt}{spt}
\DeclareMathOperator{\Graph}{graph}
\DeclareMathOperator{\Div}{div}
\DeclareMathOperator{\diam}{diam}
\newcommand{\mres}{\lfloor}
\address{Otis Chodosh, Stanford University, Department of Mathematics, Bldg.\ 380, Stanford, CA 94305, USA}
\email{ochodosh@stanford.edu}
\address{Christos Mantoulidis, Rice University, Department of Mathematics, Herman Brown Hall, Houston, TX 77005, USA} 
\email{christos.mantoulidis@rice.edu}
\address{Felix Schulze, University of Warwick, Department of Mathematics, Zeeman Building, Gibbet Hill Road, Coventry CV4 7AL, UK}
\email{felix.schulze@warwick.ac.uk}
\begin{document}

\thispagestyle{empty}

\begin{minipage}{0.28\textwidth}
\begin{figure}[H]
%\centering
\includegraphics[width=2.5cm,height=2.5cm,left]{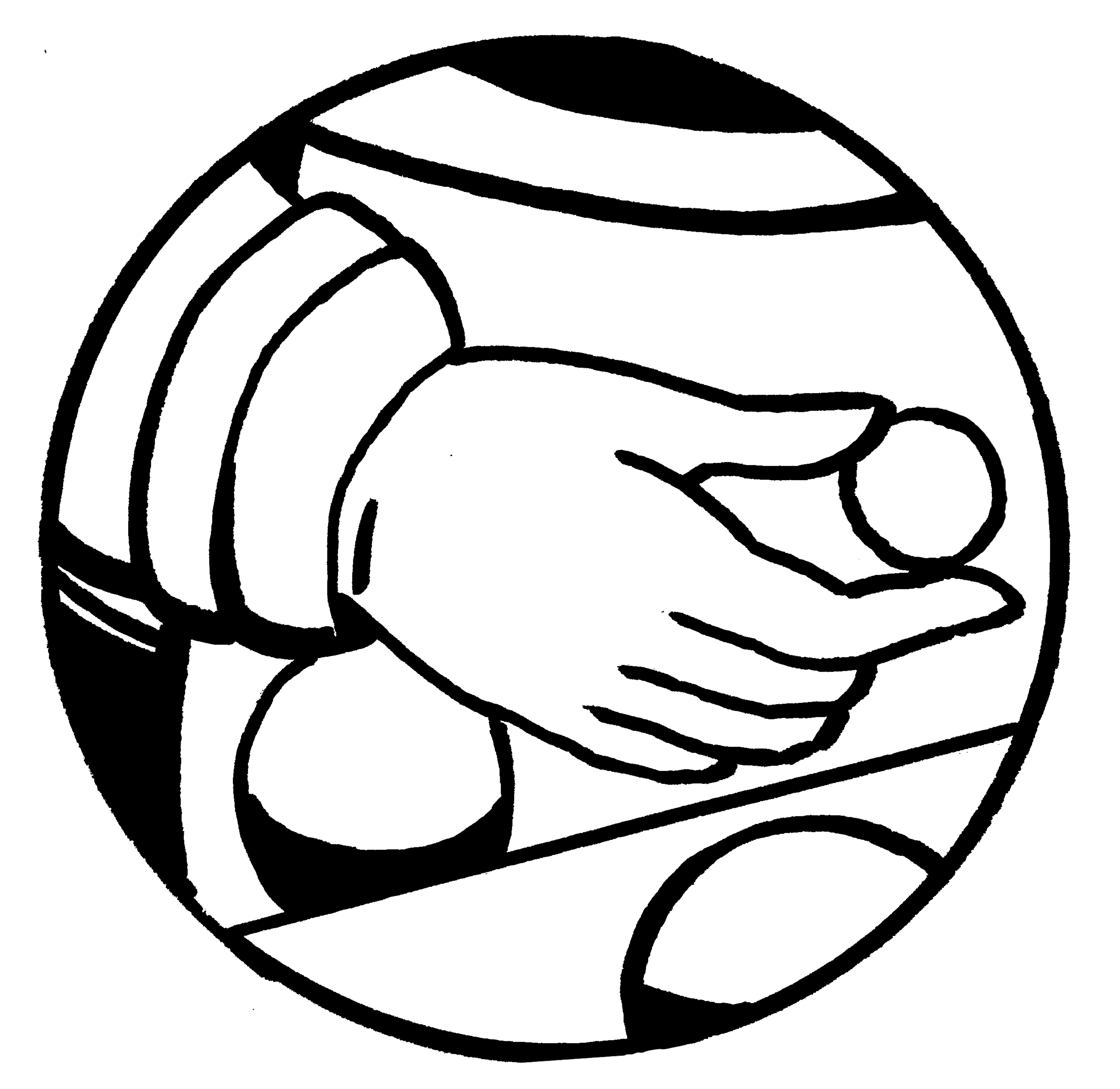}
\end{figure}
\end{minipage}
\begin{minipage}{0.7\textwidth} 
\begin{flushright}
%% The following metadata, in particular
%% the Paper No. and the DOI will be inserted by the journal
Ars Inveniendi Analytica (2024), Paper No. 3, 16 pp.
\\
DOI 10.15781/z70n-ed29
\\
ISSN: 2769-8505
\end{flushright}
\end{minipage}

\ccnote

\vspace{1cm}

%%      -------------------------------------------------------------------------------
%%      -------------------------- TITLE ----------------------------
%%      -------------------------------------------------------------------------------
%% Authors, please put here the full title of the article

\begin{center}
\begin{huge}
\textit{Improved generic regularity of codimension-1 minimizing integral currents}

%\textit{some titles take two lines}

\end{huge}
\end{center}

\vspace{1cm}

%%      -------------------------------------------------------------------------------
%%      -------------------------- AUTHORS AND AFFILIATIONS ----------------------------
%%      -------------------------------------------------------------------------------
%% Authors, please put here your full names and affiliations

\begin{minipage}[t]{.28\textwidth}
\begin{center}
{\large{\bf{Otis Chodosh}}} \\
\vskip0.15cm
\footnotesize{Stanford University}
\end{center}
\end{minipage}
\hfill
\noindent
\begin{minipage}[t]{.28\textwidth}
\begin{center}
{\large{\bf{Christos Mantoulidis}}} \\
\vskip0.15cm
\footnotesize{Rice University}
\end{center}
\end{minipage}
\hfill
\noindent
\begin{minipage}[t]{.28\textwidth}
\begin{center}
{\large{\bf{Felix Schulze}}} \\
\vskip0.15cm
\footnotesize{University of Warwick} 
\end{center}
\end{minipage}

\vspace{1cm}

%%% Please replace "James Mustard" below 
%%% with the name of the managing editor for your submission.
%%% If you are unsure about their identity
%%% please ask an editor-in-chief about.

\begin{center}
\noindent \em{Communicated by Joaquim Serra}
\end{center}
\vspace{1cm}

%%      -------------------------------------------------------------------------------
%%      -------------------------- BEGIN ABSTRACT ----------------------------
%%      -------------------------------------------------------------------------------
%% Authors, please put here the ABSTRACT and KEYBOARDS

\noindent \textbf{Abstract.} \textit{Let $\Gamma$ be a smooth, closed, oriented, $(n-1)$-dimensional submanifold of $\RR^{n+1}$. We show that there exist arbitrarily small perturbations $\Gamma'$ of $\Gamma$ with the property that minimizing integral $n$-currents with boundary $\Gamma'$ are smooth away from a set of Hausdorff dimension $\leq n-9-\eps_n$, where $\eps_n \in (0, 1]$ is a dimensional constant. \\ \\%
%	
%	{\color{red} Paragraph break?} 
This improves on our previous result (where we proved generic smoothness of minimizers in $9$ and $10$ ambient dimensions). The key ingredients developed here are a new method to estimate the full singular set of the foliation by minimizers and a proof of superlinear decay of closeness (near singular points) that holds even across non-conical scales.}
\vskip0.3cm

\noindent \textbf{Keywords.} Area minimizing hypersurface, generic regularity, Plateau problem. 
\vspace{0.5cm}

%%      -------------------------------------------------------------------------------
%%      -------------------------- BEGIN ARTICLE ----------------------------
%%      -------------------------------------------------------------------------------
%% Authors, copy the body of your paper here

\section{Introduction}

In \cite{CMS:generically.smooth.10} we showed that the smooth, oriented area minimization problem is generically solvable up to ambient dimension 10:

\begin{theorem} \label{theo:prev}
	Let $n+1 \in \{ 8, 9, 10 \}$ and $\Gamma \subset \RR^{n+1}$ be a smooth, closed, oriented, $(n-1)$-dimensional submanifold of $\RR^{n+1}$. There exist arbitrarily small perturbations $\Gamma'$ of $\Gamma$ (as $C^\infty$ graphs in the normal bundle of $\Gamma$) with the property that there exists a least-area smooth, compact, oriented hypersurface $M' \subset \RR^{n+1}$ with $\partial M' = \Gamma'$. 
\end{theorem}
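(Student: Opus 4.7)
The plan is to embed $\Gamma$ into a smooth one-parameter family of boundaries $\{\Gamma_t\}_{t \in (-\eps, \eps)}$ (for instance, $\Gamma_t = \{x + tX(x) : x \in \Gamma\}$ for a suitably chosen nowhere-vanishing normal vector field $X$), extract associated area-minimizing currents $T_t$ with $\partial T_t = \Gamma_t$, and argue that $T_t$ is smooth for almost every $t$. Taking $\Gamma' = \Gamma_t$ for any such $t$ with $|t|$ small then yields the conclusion.

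First, I would use a maximum principle / one-sided barrier argument to arrange that the supports $\spt T_t$ form a monotone family, i.e., they do not cross as $t$ varies; this endows the spacetime region $\{(x,t) : x \in \spt T_t\}$ with a genuine foliation structure. The main object to analyze is then the spacetime singular set $\sZ := \{(x,t) : x \in \sing T_t\}$. Federer's classical dimension reduction gives $\dim_\cH \sing T_t \leq n-7$ for each fixed $t$, hence the naive product estimate $\dim_\cH \sZ \leq n-6$, which is already too weak for $n \geq 7$.

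To improve on this, I would invoke the Hardt-Simon foliation theorem at each singular point $x_0 \in \sing T_0$: a stable minimizing tangent hypercone $C$ at $x_0$ admits, after scaling, a smooth foliation of a one-sided neighborhood by minimal hypersurfaces, and the $T_t$ for small $t \neq 0$ should fit into this foliation as smooth leaves near $x_0$. For $n+1 = 8$, where $\sing T_t$ is $0$-dimensional, this should show that $\sZ$ is locally a curve whose projection onto the $t$-axis has $\cH^1$-measure zero; a Fubini-type argument then yields $\sing T_t = \emptyset$ for a.e.\ $t$, which is essentially Smale's strategy.

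For $n + 1 \in \{9, 10\}$, where $\dim_\cH \sing T_t$ may be as large as $1$ or $2$, the same conclusion requires additionally (i) a quantitative rectifiability / stratification result for $\sing T_t$ along which the Hardt-Simon foliation can be run in families, and (ii) a uniform superlinear decay estimate describing how $T_t$ approaches $T_0$ near each singular point, valid even at scales where $T_0$ is not yet close to its tangent cone. I expect (ii) in the case $n+1 = 10$ to be the main obstacle: producing a superlinear rate that is robust against the spatial dimension of the singular set and that does not degenerate at non-conical scales. With such a decay in hand, a Vitali-type covering along the singular stratum combined with an Eilenberg / Fubini inequality should bound the $\cH^1$-measure of the $t$-projection of $\sZ$ by zero, yielding the desired generic regularity in dimensions $8, 9, 10$.
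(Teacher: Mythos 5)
Your sketch captures the paper's broad outline: embed $\Gamma$ in a one-parameter family, use the maximum principle / cut-and-paste to keep the minimizers $T_t$ pairwise disjoint, estimate the union of singular strata, prove a superlinear H\"older-type closeness estimate near singular points that holds across non-conical scales, and finish with a Fubini-type projection onto the $t$-axis. You correctly single out item (ii) --- superlinear decay robust at non-conical scales --- as the heart of the matter; that is precisely the content of the paper's Theorem~\ref{theo:timestamp.regularity}, and the dimension bound on $\cS^\ell$ of the whole family is Theorem~\ref{theo:singular.set.size}.

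There are, however, two substantive gaps. First, the phrase ``extract associated area-minimizing currents $T_t$ with $\partial T_t = \Gamma_t$'' quietly assumes both uniqueness of minimizers for each $\Gamma_t$ and multiplicity one up to the boundary. Without these, the pairwise-disjointness / monotone-foliation structure does not hold (two distinct minimizers with the same boundary could touch without crossing), and the graphicality near the boundary needed to run the separation estimate fails. The paper devotes all of Section~\ref{theo:construct-foliation} to reducing to this situation: one first perturbs $\Gamma$ to make the minimizer unique (\cite[Lemma A.3]{CMS:generically.smooth.10}), and then, using the Hardt--Simon and White boundary decomposition, one induct on the multiplicity $m$, perturbing to peel off the top sheet. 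Your proposal does not address this at all, yet it is an essential part of any rigorous proof.

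Second, your invocation of the Hardt--Simon foliation theorem as the mechanism to propagate smoothness along $\sing T_t$ is not quite what the paper does, and it is not clear it would work in this form once $\sing T_t$ is positive-dimensional. The Hardt--Simon foliation only applies near isolated singular points with strictly minimizing tangent cone; for positive-dimensional strata the paper instead proves the dimension bound $\dim_H \cS^\ell(\mathscr{F}) \leq \ell$ by a compactness/density-drop argument (Lemma~\ref{lemm:cone.split.pre}) that uses only the non-crossing property and uniqueness of non-crossing minimizing cones, not a local foliation. If you intend to generalize Hardt--Simon to cylindrical singular strata, that would need its own argument. Finally, the last measure-theoretic step in the paper is not an Eilenberg inequality but the quantitative dimension-reduction lemma of \cite[Proposition~7.7]{FigalliRosOtonSerra:obstacle.generic.regularity}, which converts a H\"older-$\alpha$ map from a set of known Hausdorff dimension into a bound on the dimension of almost every fiber; an Eilenberg-type coarea bound alone is not strong enough to yield the claimed fiber-wise estimate.
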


In this paper, we prove the following sharper geometric measure theory result in all dimensions, which implies the above theorem when $n+1 \in \{ 8, 9, 10 \}$. We will implicitly assume $n+1\geq 8$ throughout the paper, since otherwise there is nothing to show.

\begin{theorem} \label{theo:main.geometric}
	Let $\Gamma$ be a smooth, closed, oriented, $(n-1)$-dimensional submanifold of $\RR^{n+1}$. There exist arbitrarily small perturbations $\Gamma'$ of $\Gamma$ (as $C^\infty$ graphs in the normal bundle of $\Gamma$) such that every minimizing integral $n$-current with boundary $\llbracket \Gamma' \rrbracket$ is of the form $\llbracket M' \rrbracket$ for a smooth, precompact, oriented hypersurface $M'$ with $\partial M' = \Gamma'$ and
	\[ \sing M' = \emptyset \text{ if $n+1 \leq 10$, else } \dim_H \sing M' \leq n-9-\eps_n \]
	where $\eps_n \in (0, 1]$ is the dimensional constant defined in \eqref{eq:eps.n}. In fact the singular strata $\cS^\ell(M')$, $\ell \in \NN$, of each such $M'$ (see Definition \ref{defi:strata}) can be arranged to satisfy 
	\[ \cS^0(M') = \cS^1(M') = \cS^2(M') = \emptyset, \; \dim_H \cS^\ell(M') \leq \ell - 2 - \eps_n \text{ for } \ell \geq 3, \]
	on top of the standard regularity $\cS^\ell(M') = \emptyset$ for $\ell > n-7$.
\end{theorem}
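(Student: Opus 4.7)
The plan is to produce, for any given $\Gamma$, a smooth one-parameter family of small $C^\infty$ normal graphical perturbations $\{\Gamma_t\}_{t \in (-\delta,\delta)}$ together with minimizing integral currents $T_t$ satisfying $\partial T_t = \llbracket \Gamma_t \rrbracket$, and then to prove the desired singular-set estimate for $T_t$ at a generic parameter $t$. The natural device is the spacetime singular stratum
\[
\cS^\ell_* := \bigl\{ (t,x) \in (-\delta,\delta) \times \RR^{n+1} : x \in \cS^\ell(T_t) \bigr\}.
\]
If one can show $\cS^0_* = \cS^1_* = \cS^2_* = \emptyset$ and $\dim_H \cS^\ell_* \leq \ell - 1 - \eps_n$ for $\ell \geq 3$, then the standard slicing inequality for Hausdorff measure yields $\cS^\ell(T_t) = \emptyset$ for $\ell \leq 2$ and $\dim_H \cS^\ell(T_t) \leq \ell - 2 - \eps_n$ for $\ell \geq 3$, at $\cH^1$-a.e.\ $t$, which is the conclusion of the theorem; any such $t$ then furnishes an admissible perturbation.

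\textbf{Two key inputs.} The first is a quantitative Naber--Valtorta-type stratification argument applied to $\cS^\ell_*$ as a subset of the $(n+2)$-dimensional spacetime $(-\delta,\delta) \times \RR^{n+1}$. Here the foliation is used as an extra rigid direction: distinct leaves $T_{t_1} \neq T_{t_2}$ cannot share support (by maximum-principle/avoidance for one-sided foliations), and this must be quantified into an $L^2$-separation between nearby leaves. The second, and decisive, input is a superlinear $L^2$-decay at singular points that holds across \emph{non-conical} scales: at $x_0 \in \sing T_t$ with tangent cone $C$,
\[
r^{-n-2} \int_{B_r(x_0)} \dist(\cdot, \spt C)^2 \, d\|T_t\| \lesssim r^{2\eps_n}
\]
for all sufficiently small $r > 0$, even at scales at which the axes of the tangent cone drift. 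This is a refinement of Simon's Lojasiewicz inequality in which the rotation error is tracked as a lower-order correction and iterated. Combining the two inputs controls the $r$-covering number of $\cS^\ell_*$ by $r^{-(\ell-1-\eps_n)}$, giving the spacetime dimension bound and hence, after slicing, the theorem.

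\textbf{Main obstacle.} The hardest step is the non-conical decay. At scales where the tangent cone is essentially unique, classical elliptic regularity and Simon's Lojasiewicz inequality deliver polynomial decay, but this is the easy regime. At non-conical scales, the tangent cone's direction may rotate between scales, and the rotation error can a priori absorb all the decay gained from the energy inequality at each step. Controlling this requires a careful joint iteration of the rotation and the $L^2$-closeness that does not rely on the a priori existence of a unique tangent cone: precisely the refinement announced by the authors in the abstract, and without which the quantitative stratification argument of the previous paragraph would not close with any positive $\eps_n$.
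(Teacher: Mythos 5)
Your overall architecture — foliate, study a spacetime singular set, prove superlinear regularity, then slice — is the right philosophical shape (indeed it is the FROS-inspired one), but the way you have packaged the pieces is flawed at several concrete points, and the technical ingredients you invoke are substantially different from what the paper actually uses.

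\textbf{The claimed spacetime dimension bound is false.} You propose to show $\dim_H \cS^\ell_* \leq \ell - 1 - \eps_n$ and then conclude by ordinary coarea slicing. But the timestamp function $\mathfrak{t}$, being $\alpha$-H\"older for some $\alpha > 1$, makes the map $x \mapsto (\mathfrak{t}(x),x)$ bi-Lipschitz on $\sing\mathscr{F}$, so $\dim_H \cS^\ell_* = \dim_H \cS^\ell(\mathscr{F})$. Since $\cS^\ell(\mathscr{F}) \supset \cS^\ell(T_0)$, and Simon's examples \cite{Simon:ex} show that a single (even stable) hypersurface can have $\dim_H \cS^\ell(T_0) = \ell$ on the nose, your target inequality $\dim_H \cS^\ell_* \leq \ell - 1 - \eps_n$ cannot hold. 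Likewise, the clean statement $\cS^0_* = \cS^1_* = \cS^2_* = \emptyset$ is too strong: the time-zero leaf can certainly have an isolated Simons-cone singularity, giving a point in $\cS^0_*$. The correct conclusion is a statement at \emph{a.e.}\ $t$, not at all $t$. What the paper actually does is prove two logically separate facts and combine them via \cite[Proposition 7.7]{FigalliRosOtonSerra:obstacle.generic.regularity}: (i) $\dim_H \cS^\ell(\mathscr{F}) \leq \ell$ with \emph{no} $\eps_n$-gain (Theorem \ref{theo:singular.set.size}), and (ii) $\mathfrak{t}$ is $\alpha$-H\"older on $\sing\mathscr{F}$ for any $\alpha < \kappa_n+1 = 2+\eps_n$ (Theorem \ref{theo:timestamp.regularity}). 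The $\eps_n$ appears only in the H\"older exponent and is converted into a dimension drop of $\alpha$ (not $1$) by the FROS slicing lemma — which is much sharper than Eilenberg's coarea bound. Your ``standard slicing inequality'' applied to your (false) premise is exactly the path the FROS lemma is designed to avoid.

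\textbf{The technical engine you name is not the one that drives the paper.} You identify the hardest step as a Lojasiewicz-type $L^2$-decay refined to track rotation of tangent-cone axes at non-conical scales, and you attribute the foliation rigidity to a Naber--Valtorta-style quantitative stratification with $L^2$-separation between leaves. Neither machine appears here. The dimension bound $\dim_H \cS^\ell(\mathscr{F}) \leq \ell$ is obtained by the classical Almgren--Federer dimension-reduction argument, adapted to a family via the elementary Lemma \ref{lemm:cone.split.pre} (a non-smooth-crossing/density argument using the strong maximum principle that forces nearby high-density singular points toward the spine of the limiting cone), not by counting quantitative strata. And the superlinear H\"older estimate is produced by an iterated \emph{density-drop} argument (Lemma \ref{lemm:conical.decay} for conical scales, Lemma \ref{lemm:separation.control} for the finitely many non-conical scales, with $Q_B \le 2\Theta\delta^{-1}$ bad scales by monotonicity), driven by Jacobi-field decay on cones (Lemma \ref{lemm:jacobi.field.growth}), not by a Lojasiewicz inequality. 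In particular, no tracking of tangent-cone axis rotation occurs: the argument simply bounds the number of scales where the density drops rather than attempting to control what happens geometrically at those scales.

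\textbf{You also omit the boundary-behaviour bookkeeping entirely.} To actually construct a one-parameter family $(\Gamma_s)_s$ satisfying the foliation hypotheses (pairwise disjoint supports, multiplicity one, graphicality over a fixed $\Sigma$ near $\Gamma$, monotone boundary rate), one must first reduce to the case of a uniquely minimizing, multiplicity-one current. The paper does this via Hardt--Simon boundary regularity, White's boundary multiplicity theorem, and an inductive ``peel off the top layer'' argument (Section \ref{theo:construct-foliation}: Claims \ref{clai:main.high.mult.weakly.close}--\ref{clai:main.high.mult.reduction}). Your sketch treats the foliation construction as free, but without these reductions the hypotheses of Theorem \ref{theo:timestamp.regularity} are not available, and the whole argument does not start.
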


\begin{remark}
For example, when $n+1=11$ this shows that every minimizer $M'$ for $\Gamma'$ has 
\begin{equation}\label{eq:11-dim}
\cS^0(M')=\cS^1(M')=\cS^2(M')= \emptyset \textrm{\quad and \quad } \dim_H\cS^3(M') \leq 1-\eps_{10} \approx 0.65
\end{equation}
(see Remark \ref{rema:eps.n} below). This should be compared with the fact that $\cS^3(M')$ is $3$-rectifiable \cite{Simon:cyl,NaberValtorta}. Note that examples of \emph{stable} hypersurfaces having singular set satisfying \eqref{eq:11-dim} have been recently constructed in \cite{Simon:ex}. 
\end{remark}

The dimensional constant $\eps_n$ comes from the analysis of minimizing cones, and specifically relates to the rate of decay in the radial direction of positive Jacobi fields on $n$-dimensional minimizing cones, which can be bounded from above by the constant
\begin{equation} \label{eq:kappa.n}
	\kappa_n = \frac{n-2}{2} - \sqrt{\frac{(n-2)^2}{4} - (n-1)} \in (1, 2];
\end{equation}
see \cite{Simon:decay, Wang:smoothing.cones} and Lemma \ref{lemm:jacobi.field.growth}. Specifically, $\eps_n$ is given by:
\begin{equation} \label{eq:eps.n}
	\eps_n = \kappa_n - 1 \in (0, 1].
\end{equation}

\begin{remark} \label{rema:eps.n}
	A computation shows that $\eps_n$ decreases toward $0$, with initial values:
	\begin{align*}
		\eps_7 & = 1, \\
		\eps_8 & \approx 0.58, \\
		\eps_9 & \approx 0.44, \\
		\eps_{10} & \approx 0.35.
	\end{align*}
\end{remark}

Theorem \ref{theo:main.geometric} follows from the combination of two independent results about \textit{families} of minimizers. The first result is a bound on the size of the union of strata for a family of pairwise disjoint minimizers. Since it is local, we state it for minimizing boundaries inside open sets.

\begin{theorem} \label{theo:singular.set.size}
	Let $\mathscr{F}$ be a family of minimizing boundaries in an open set $U \subset \RR^{n+1}$ whose supports are pairwise disjoint in $U$. For $\ell \in \NN$, we have
	\[ \cS^\ell(\mathscr{F}) = \cup_{T \in \mathscr{F}} \cS^\ell(T) \implies  \dim_H \cS^\ell(\mathscr{F}) \leq \ell. \]
\end{theorem}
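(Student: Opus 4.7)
The plan is to establish this as a Federer-type dimension-reduction result for the relatively closed set $S := \cS^\ell(\mathscr{F}) \subset U$, viewing $S$ as a natural analogue of the singular set of a single minimizer. The novelty is to track how the pairwise-disjoint members of $\mathscr{F}$ interact when one passes to blow-up limits.

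First I would take two preliminary steps. (i) Enlarge $\mathscr{F}$ to its closure under weak convergence of integral $n$-currents in $U$: this remains a family of minimizing boundaries by standard lower-semicontinuity and compactness, and by the strong maximum principle for minimizing boundaries with disjoint support the enlarged family is still pairwise disjoint. (ii) Verify that $S$ is relatively closed in $U$: if $x_i \in S$ converges to $x \in U$, choose $T_i \in \mathscr{F}$ with $x_i \in \cS^\ell(T_i)$, extract a weakly convergent subsequence $T_i \to T_\infty \in \mathscr{F}$ by compactness, and conclude $x \in \cS^\ell(T_\infty) \subset S$ by the upper-semicontinuity of Almgren's strata under weak convergence of minimizers.

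Next, by Federer's abstract dimension-reduction principle, to prove $\dim_H S \leq \ell$ it suffices to show that for every $x_0 \in S$ and every $r_i \downarrow 0$, no Hausdorff subsequential limit $S_\infty$ of the rescaled sets $r_i^{-1}(S - x_0)$ contains an $(\ell+1)$-dimensional linear subspace $P$ through the origin. Toward ruling this out, I would simultaneously extract a subsequential weak limit $\mathscr{F}_\infty$ of the rescaled family $\{ r_i^{-1}(T - x_0) : T \in \mathscr{F} \}$; this is again a family of pairwise disjoint minimizing $n$-currents in $\RR^{n+1}$, and it contains the tangent cone $C_0$ to $T_0$ at $x_0$ (where $x_0 \in \cS^\ell(T_0)$). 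Upper-semicontinuity of strata then gives $S_\infty \subset \cS^\ell(\mathscr{F}_\infty) = \bigcup_{T^* \in \mathscr{F}_\infty} \cS^\ell(T^*)$, so we would have $P \subset \cS^\ell(\mathscr{F}_\infty)$.

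The main obstacle is to convert the containment $P \subset \cS^\ell(\mathscr{F}_\infty)$ into a contradiction. Since $C_0$ is a cone with spine $L_0$ of dimension $\ell_0 \leq \ell$, we have $\cS^\ell(C_0) \subseteq L_0$, so the $(\ell+1)$-plane $P$ cannot sit inside $\cS^\ell(C_0)$, and there must exist points $y \in P \setminus L_0$ that lie in $\cS^\ell(T^*_y)$ for some $T^*_y \in \mathscr{F}_\infty \setminus \{C_0\}$, with $\spt T^*_y$ disjoint from $\spt C_0$. I would then iterate the Federer blow-up at such $y$, combining the strong maximum principle across distinct members of the family with upper-semicontinuity of strata, with the goal of forcing $\mathscr{F}_\infty$ to be translation-invariant along every direction of $P$. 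Once such invariance is established, each $T^* \in \mathscr{F}_\infty$ splits as a product $P \times T^*_\perp$ in $\RR^{n+1} = P \oplus P^\perp$, and any singularity of such $T^*$ then lies in a stratum $\cS^k$ with $k \geq \ell + 1$ rather than in $\cS^\ell$. Hence $\cS^\ell(\mathscr{F}_\infty) = \emptyset$, contradicting $P \subset \cS^\ell(\mathscr{F}_\infty)$ and closing the Federer reduction.
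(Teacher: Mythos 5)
Your plan is to re-run Federer dimension reduction for the set $S=\cS^\ell(\mathscr{F})$ and then force translation invariance on the blow-up family $\mathscr{F}_\infty$. This is in the same \emph{spirit} as the paper (blow up, exploit disjointness to localize the bad set near an $\ell$-plane), but two steps on which your argument hinges are false or unsupported, and they are exactly the places where the paper introduces nontrivial machinery.

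First, you appeal twice to ``upper-semicontinuity of Almgren's strata under weak convergence of minimizers'': to see that $S$ is relatively closed, and, more importantly, to get $S_\infty\subset\cS^\ell(\mathscr{F}_\infty)$ in the Federer step. This is not true. If $\bx_i\in\cS^\ell(T_i)$, $T_i\to T$, $\bx_i\to\bx$, the tangent cones at $\bx$ in the limit can have \emph{higher}-dimensional spine than those at $\bx_i$ (symmetry can be created in the limit; think of isolated singularities degenerating to a cylindrical one, as in Simon's work on cylindrical tangent cones). One only has upper semicontinuity of \emph{density}, not of stratum membership, and in fact $\cS^\ell(T)$ need not even be relatively closed for a single $T$. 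The paper avoids this precisely by passing to the quantitative sets $\cS^\ell_\eps$ of Definition \ref{defi:effective.strata}: being $\geq\eps$ from any $\llbracket\Pi\rrbracket\times\cC_0$ with $\dim\Pi=\ell+1$ is a closed condition under flat convergence of tangent cones, which is what makes the compactness argument of Lemma \ref{lemm:cone.split.pre} run.

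Second, and more seriously, the crucial claim — ``iterate the Federer blow-up at such $y$... with the goal of forcing $\mathscr{F}_\infty$ to be translation-invariant along $P$'' — is not an argument, and I don't see how it can be made one with just the strong maximum principle. In the single-minimizer reduction, invariance is extracted from cone-splitting: if a tangent cone $\cC$ has $\Theta_\cC(y)=\Theta_\cC(\bOh)$ at $y\neq\bOh$, then $\cC$ splits $\RR y$. In your situation $y$ lies in $\cS^\ell(T^*_y)$ for some \emph{other} $T^*_y\in\mathscr{F}_\infty$, and upper semicontinuity only gives $\Theta_{T^*_y}(y)\leq\Theta_{C_0}(\bOh)$, with a strict inequality generically; with strict inequality there is no splitting and the iteration stalls. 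Disjointness plus the strong maximum principle gives you nothing here beyond ``non-coincident $\Rightarrow$ disjoint.'' The paper's Proposition 3.3 from \cite{CMS:generically.smooth.10} is a genuinely stronger rigidity statement (non-crossing + density $\geq\Theta_\cC(\bOh)$ forces $T=\cC$), and even that only bites after the paper first slices $\cS^\ell_\eps(\mathscr{F})$ into density layers $\cS^{\ell,k}_\eps(\mathscr{F})$ of width $\eta$ so that, upon blow-up, all competing singular points automatically have density within $\eta$ of $\Theta_{C_0}(\bOh)$. That density-window trick, absent from your proposal, is exactly what supplies the ``matching densities'' your cone-splitting iteration would need; without an analogue of it, the argument has a genuine gap.

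(Smaller point: $\cS^\ell(C_0)\subseteq L_0$ is not literally right when $\dim L_0<\ell$, since off-spine singular rays of $C_0$ can still have tangent cones with spine dimension $\leq\ell$; you only need $\dim_H\cS^\ell(C_0)\leq\ell$, which follows from the standard single-minimizer bound, so this is easily fixed — but the two issues above are not.)
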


\begin{remark} \label{rema:singular.set.size}
	Note that:
	\begin{enumerate}
		\item[(a)] When $\ell=0$, the work of Hardt--Simon \cite{HardtSimon:isolated.singularities} implies that $\cS^0(\mathscr{F})$ is discrete. 
		\item[(b)] If $\mathscr{F}$ is a singleton, Theorem \ref{theo:singular.set.size} recovers the standard bound on the size of the strata of a single minimizer (see Remark \ref{rema:strata}).
	\end{enumerate}
\end{remark}

The second result proves, for families of pairwise disjoint minimizers with prescribed smooth boundaries, that if one minimizer is near the singular part of another then the closeness propagates to the boundary with a superlinear rate relating to $\kappa_n$ from \eqref{eq:kappa.n}. To state the result we need to consider for smooth, closed, oriented, $(n-1)$-dimensional $\Gamma \subseteq \RR^{n+1}$, the set of all possible minimizers with boundary $\Gamma$:
\[ \mathscr{M}(\Gamma) = \{ \text{minimizing integral $n$-currents } T \text{ in $\RR^{n+1}$ with } \partial T = \llbracket \Gamma \rrbracket \}. \]

\begin{theorem} \label{theo:timestamp.regularity}
	Let $(\Gamma_s)_{s \in [-\delta, \delta]}$ be a smooth deformation of $\Gamma_0 = \Gamma$, a smooth, closed, oriented, $(n-1)$-dimensional submanifold of $\RR^{n+1}$. Consider the family
	\[ \mathscr{F} = \cup_{s \in [-\delta, \delta]} \mathscr{M}(\Gamma_s). \]
	and assume the following:
	\begin{enumerate}
		\item[(a)] All elements of $\mathscr{F}$ with distinct boundaries have pairwise disjoint supports.
		\item[(b)] All elements of $\mathscr{F}$ have multiplicity-one up to their boundary.
		\item[(c)] All elements of $\mathscr{F}$ are near their boundary graphical over a fixed hypersurface $\Sigma$ with nonempty boundary; specifically, there exists $h : \mathscr{F} \to C^\infty(\Sigma)$ so that for all $s \in [-\delta, \delta]$, $T_s \in \mathscr{M}(\Gamma_s)$:
		\[ \Graph_{\Sigma} h(T_s) \subset \spt T_s, \; \partial (\Graph_{\Sigma} h(T_s)) = \Gamma_s. \]
		\item[(d)] The graph map $h : \mathscr{F} \to C^\infty(\Sigma)$ is increasing along $\Gamma$ with a definite rate $\alpha > 0$ in the sense that for all $s_j \in [-\delta, \delta]$, $T_{s_j} \in \mathscr{M}(\Gamma_{s_j})$, $j = 1, 2$,
		\[ s_1 < s_2 \implies h(T_{s_2}) - h(T_{s_1}) \geq \alpha (s_2 - s_1) \text{ on } \Gamma. \]
	\end{enumerate}
	For convenience, denote
	\[ \spt \mathscr{F} = \cup_{s \in [-\delta, \delta]} \cup_{T_s \in \mathscr{M}(\Gamma_s)} \spt T_s, \]
	\[ \sing \mathscr{F} = \cup_{s \in [-\delta, \delta]} \cup_{T_s \in \mathscr{M}(\Gamma_s)} \sing T_s. \]
	Then, the timestamp function
	\[ \mathfrak{t} : \spt \mathscr{F} \to [-\delta, \delta], \]
	\[ \mathfrak{t}(\bx) = s \text{ for all } \bx \in \spt T_s, \; T_s \in \mathscr{M}(\Gamma_s), \; s \in [-\delta, \delta], \]
	is $\alpha$-H\"older on $\sing \mathscr{F}$ for every $\alpha \in (0, \kappa_n + 1)$ with $\kappa_n$ as in \eqref{eq:kappa.n}.
\end{theorem}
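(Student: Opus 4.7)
We argue by contradiction and blow-up. Fix $\alpha \in (0, \kappa_n + 1)$ and suppose $\mathfrak{t}$ is not $\alpha$-H\"older on $\sing \mathscr{F}$. Extract sequences of singular points $\bx_1^j \in \sing T_{s_1^j}$, $\bx_2^j \in \sing T_{s_2^j}$ with $T_{s_i^j} \in \mathscr{F}$, $r_j := |\bx_1^j - \bx_2^j| \to 0$, and $\Delta s_j := |s_1^j - s_2^j|$ satisfying $\Delta s_j / r_j^{\alpha} \to \infty$. Translating to $\bx_1^j$ and dilating by $r_j^{-1}$, by Allard compactness (using the multiplicity-one hypothesis (b)), a subsequence of rescaled leaves $\tilde T_{s_1^j}$ converges to a minimizing tangent cone $C$ at $\bx_1^j$, while $\tilde T_{s_2^j}$ converges to some minimizer $\tilde T'$; the rescaled images of $\bx_2^j$ accumulate at a point $\hat{\by} \in \partial B_1 \cap \sing \tilde T'$.

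The technical heart of the argument is a \emph{superlinear decay of closeness} between two leaves near a singular point, valid across all sufficiently small scales (including non-conical ones): for $\bx \in \sing T_s$ and any leaf $T_{s'} \in \mathscr{F}$ close in time,
\[ \sup\{ \dist(\bz, \spt T_s) : \bz \in \spt T_{s'} \cap B_\rho(\bx) \} \le C\, |s - s'|\, \rho^{\kappa_n} \]
for all sufficiently small $\rho > 0$. At scales where $T_s$ is close to its tangent cone at $\bx$, this is Simon's classical Jacobi-field decay with characteristic exponent $\kappa_n$ from \eqref{eq:kappa.n}. Upgrading it to all (including non-conical) scales is the main novelty: one argues by a further contradiction and blow-up, picking the largest scale at which the estimate fails, and extracts in the limit a positive Jacobi field on a tangent cone realizing an impossible decay rate, using monotonicity and uniqueness of tangent cones to bridge the conical and non-conical regimes.

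Armed with this estimate at $\rho = r_j$, we obtain $\dist(\bx_2^j, \spt T_{s_1^j}) \le C \Delta s_j\, r_j^{\kappa_n}$; in rescaled coordinates, this forces $\hat{\by} \in \spt C$, and a maximum-principle argument for disjoint minimizers (hypothesis (a)) then forces $C = \tilde T'$, so that $C$ carries singular points at both $0$ and $\hat{\by}$. Condition (d) supplies the missing quantitative information: the normal graphs of $\tilde T_{s_2^j}$ over $\tilde T_{s_1^j}$, rescaled by $1/\Delta s_j$, produce in the limit a positive Jacobi field $\phi$ on $C$ with uniformly positive boundary values. The sharp $\rho^{\kappa_n}$ decay of $\phi$ near the apex $0$, combined with the extra factor of $\rho$ coming from the spatial (rather than normal) character of the displacement between the two singular points $0$ and $\hat{\by}$, yields $\Delta s_j \le C r_j^{\kappa_n + 1}$, contradicting $\Delta s_j/r_j^\alpha \to \infty$ for any $\alpha < \kappa_n + 1$.

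The principal obstacle is the superlinear decay across non-conical scales; Simon's original analysis yields the exponent $\kappa_n$ only on intervals where the minimizer is close to a tangent cone, and eliminating this closeness hypothesis requires new technique. Once in place, the blow-up contradiction above is a careful combination of the Jacobi-field estimate with the disjointness, multiplicity, graphicality, and monotonicity built into hypotheses (a)--(d).
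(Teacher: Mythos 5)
Your proposal has the right overall shape --- propagating a superlinear ($\sim\rho^{\kappa_n}$) closeness estimate from a singular point out to the boundary and contradicting assumption (d) --- but the pivotal step, obtaining decay ``across non-conical scales,'' is exactly where the real difficulty lies, and your sketch for it does not work. You invoke ``monotonicity and \emph{uniqueness of tangent cones} to bridge the conical and non-conical regimes.'' Uniqueness of tangent cones is not available in this setting: at points of $\cS^\ell(T)$ with $\ell\ge 1$ tangent cones may well be non-unique, and even in the isolated ($\ell=0$) case it is a deep theorem of Simon, not an input. Concretely, the scale $r_j=|\bx_1^j-\bx_2^j|$ at which you blow up has no reason to be a ``conical scale'' for $T_{s_1^j}$ at $\bx_1^j$, so the rescaled currents need not converge to a tangent cone $C$; the Jacobi-field argument in your final paragraph, which lives on the cone $C$, does not get off the ground. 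Likewise, ``pick the largest scale at which the estimate fails and blow up'' does not yield a cone unless you already know the minimizer is well-approximated by a single cone across a range of scales, which is precisely what is unavailable at non-conical scales.

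The paper circumvents tangent-cone uniqueness entirely by running a scale-by-scale dichotomy with a monotonicity budget. At dyadic scales $A^q\gamma^{m-1}$ it distinguishes: (A) the density is nearly constant across the scale, in which case a positive-Jacobi-field comparison on \emph{whatever} nearby cone arises (Lemma~\ref{lemm:jacobi.field.growth}) yields a contraction factor $A^{-\lambda}$ for the leaf separation (Lemma~\ref{lemm:conical.decay}); or (B) the density drops by at least $\delta$, where only a coarse Harnack-type bound (Lemma~\ref{lemm:separation.control}) applies, giving a possibly large but bounded factor $L/A$. The monotonicity formula shows case (B) occurs at most $Q_B\le 2\Theta\delta^{-1}$ times, so the coarse factor is absorbed after exponentiation, and the (A)-scales dominate to give the $\gamma^{m\lambda}$ decay. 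This density-drop budget is the idea that makes the non-conical regime tractable and is what is missing from your proposal. A secondary but important discrepancy: the quantity that iterates cleanly is the distance from $\spt T'$ to the \emph{effective regular part} $\cR_{\ge\rho}(T)\cap\partial B_1(\bOh)$, not to $\spt T$ itself; near a singular point $\spt T'$ can be arbitrarily close to $\spt T$ without being close to its regular part, and the maximum-principle/Jacobi-field comparisons all live on $\reg T$, so your stated estimate $\dist(\bz,\spt T_s)\le C|s-s'|\rho^{\kappa_n}$ is both the wrong quantity and, as stated, not of obvious use for a lower bound on the leaf separation.
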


Theorems \ref{theo:singular.set.size} and \ref{theo:timestamp.regularity} imply:

\begin{corollary} \label{coro:main}
	Let $(\Gamma_s)_{s \in [-\delta, \delta]}, (\mathscr{M}(\Gamma_s))_{s \in [-\delta, \delta]}$ be as in Theorem \ref{theo:timestamp.regularity}. Then, 
	\[ \cS^0(T_s) = \cS^1(T_s) = \cS^2(T_s) = \emptyset, \; \dim_H \cS^\ell(T_s) \leq \ell - 9 - \eps_n \text{ for } \ell \geq 3, \]
	for all $T_s \in \mathfrak{M}(\Gamma_s)$ for a.e.\ $s \in [-\delta, \delta]$, where $\eps_n > 0$ is as in \eqref{eq:eps.n}.
\end{corollary}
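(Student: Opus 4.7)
The plan is to derive Corollary~\ref{coro:main} by combining Theorems~\ref{theo:singular.set.size} and~\ref{theo:timestamp.regularity} through a H\"older coarea slicing inequality: the first theorem controls the total Hausdorff dimension of each union-of-strata $\cS^\ell(\mathscr{F})$ across the whole family, while the second promotes the timestamp $\mathfrak{t}$ to a superlinear H\"older regularity on $\sing\mathscr{F}$ with exponent approaching $\kappa_n+1$. Slicing the former by the level sets of the latter converts these two pieces of data into the stratum bound on each minimizer $T_s$ for almost every $s$.

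First, I would check that $\mathscr{F} = \bigcup_{s \in [-\delta, \delta]} \mathscr{M}(\Gamma_s)$ satisfies the hypothesis of Theorem~\ref{theo:singular.set.size}: assumption~(a) of Theorem~\ref{theo:timestamp.regularity} gives pairwise disjoint supports among minimizers with distinct boundaries, so
\[ \dim_H \cS^\ell(\mathscr{F}) \leq \ell \quad \text{for every } \ell \in \NN. \]
Next, I would fix $\alpha \in (0, \kappa_n + 1)$ and invoke Theorem~\ref{theo:timestamp.regularity} to conclude that $\mathfrak{t}|_{\cS^\ell(\mathscr{F})}$ is $\alpha$-H\"older, and then establish the following elementary H\"older coarea inequality on Hausdorff content: for any $A \subseteq \RR^{n+1}$ with $\mathcal{H}^\ell_\infty(A) < \infty$ and any $\alpha$-H\"older $f : A \to \RR$,
\[ \int \mathcal{H}^{\ell - \alpha}_\infty \bigl(A \cap f^{-1}(t)\bigr)\, dt \leq C\, \mathcal{H}^\ell_\infty(A). \]
The proof is a one-line Vitali argument: cover $A$ by balls $B(x_i, r_i)$ nearly realizing its outer $\ell$-content; each is mapped by $f$ into a $t$-interval of length at most $2Cr_i^\alpha$, and integrating in $t$ combines the factor $r_i^\alpha$ with the $r_i^{\ell-\alpha}$ covering of each slice. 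In particular, for Lebesgue-a.e.\ $t$,
\[ \dim_H \bigl(A \cap f^{-1}(t)\bigr) \leq \ell - \alpha. \]
Applied with $A = \cS^\ell(\mathscr{F})$ and $f = \mathfrak{t}$, together with the inclusion $\cS^\ell(T_s) \subseteq \cS^\ell(\mathscr{F}) \cap \mathfrak{t}^{-1}(s)$ valid for every $T_s \in \mathscr{M}(\Gamma_s)$, this produces the desired stratum bound uniformly in $T_s$ at a.e.\ $s$.

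Finally, I would take a countable sequence $\alpha_j \uparrow \kappa_n + 1 = 2 + \eps_n$, intersect the corresponding full-measure sets of $s$, and pass to the limit to extract the stated stratum estimate; for $\ell \in \{0,1,2\}$ the numerical exponent is negative, forcing $\cS^\ell(T_s) = \emptyset$ as asserted. Specializing to $\ell = n-7$ (the top singular stratum available in codimension one) recovers exactly the main theorem's bound $\dim_H \sing T_s \leq n-9-\eps_n$ used in Theorem~\ref{theo:main.geometric}. I expect the only genuine obstacle is keeping the H\"older coarea argument at the level of Hausdorff content so as to sidestep measurable-slice issues; once this is in place, every remaining input has been discharged by Theorems~\ref{theo:singular.set.size} and~\ref{theo:timestamp.regularity}, and the corollary reduces to the slicing calculation above.
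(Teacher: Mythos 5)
Your overall plan---combining Theorems \ref{theo:singular.set.size} and \ref{theo:timestamp.regularity} via a H\"older slicing argument---is exactly the paper's plan; the paper defers the measure-theoretic slicing step to \cite[Prop.\ 7.7 (a), (b)]{FigalliRosOtonSerra:obstacle.generic.regularity}, whereas you propose to rederive it directly. The direct route is viable, but the coarea lemma as you state it does not deliver the conclusions you draw from it. The hypothesis $\cH^\ell_\infty(A) < \infty$ is vacuous for bounded sets (a single large covering ball already gives finite $\cH^\ell_\infty$-content), and the conclusion $\int \cH^{\ell-\alpha}_\infty(A \cap f^{-1}(t))\,dt < \infty$ does \emph{not} imply $\dim_H(A \cap f^{-1}(t)) \leq \ell - \alpha$ for a.e.\ $t$: finite $\cH^d_\infty$-content does not bound Hausdorff dimension by $d$ (the unit ball in $\RR^3$ has finite $\cH^1_\infty$-content but dimension $3$). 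What Theorem \ref{theo:singular.set.size} actually supplies is the vanishing $\cH^{\ell+\delta}_\infty(\cS^\ell(\mathscr{F})) = 0$ for every $\delta > 0$; feeding \emph{that} into your covering argument gives $\int \cH^{\ell+\delta-\alpha}_\infty(\text{slice})\,dt = 0$, hence $\cH^{\ell+\delta-\alpha}_\infty(\text{slice}) = 0$ a.e., and now the dimension bound follows because $\cH^d_\infty = 0$ if and only if $\cH^d = 0$. Tuning $\delta \downarrow 0$ and $\alpha \uparrow 2 + \eps_n$ along countable sequences then yields the stratum bound $\ell - 2 - \eps_n$ for $\ell \geq 3$, as intended.

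For $\ell \in \{0,1,2\}$, however, the emptiness claim cannot be extracted from the coarea inequality at all, even after the fix. You write that ``the numerical exponent is negative, forcing $\cS^\ell(T_s) = \emptyset$,'' but for $d < 0$ the content $\cH^d_\infty$ is identically zero on bounded sets (cover by a single ball of arbitrarily large diameter, whose contribution $(\diam/2)^d$ tends to zero), so the inequality degenerates and $\cH^{\text{negative}}_\infty = 0$ does not detect nonemptiness. What is needed here---and is the content of the part (a) cited in the paper---is the companion fact that an $\alpha$-H\"older map sends any set of Hausdorff dimension $< \alpha$ to a Lebesgue-null subset of $\RR$. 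Since $\dim_H \cS^\ell(\mathscr{F}) \leq \ell < 2 + \eps_n$, choosing $\alpha \in (\ell, 2+\eps_n)$ gives $|\mathfrak{t}(\cS^\ell(\mathscr{F}))| = 0$, so $\cS^\ell(\mathscr{F}) \cap \mathfrak{t}^{-1}(s) = \emptyset$ and hence $\cS^\ell(T_s) = \emptyset$ for a.e.\ $s$. This is a covering argument of the same flavor as yours, but it is a different statement; once you supply it, the rest of the proof (intersecting countably many full-measure sets in $s$) is correct.
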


All these tools can be put together to yield Theorem \ref{theo:main.geometric}.

\begin{remark}
	These same improved regularity results should hold for homological minimizers in Riemannian manifolds under generic perturbations of the metric, similarly to  \cite{CMS:generically.smooth.10}. %We chose not to include this here to keep our presentation simple.
\end{remark}

\begin{remark}
 \label{rema:FROS}
As already pointed out in our previous work \cite{CMS:generically.smooth.10}, there is a connection to the recent work of Figalli--Ros-Oton--Serra \cite{FigalliRosOtonSerra:obstacle.generic.regularity} 
on generic regularity for free boundaries in the obstacle problem. That work, too, relies on a subtle derivation of superlinear H\"older-continuity estimate on a timestamp function for a foliation to prove the smallness of a spacetime singular set across all time parameters $t$. In our previous work \cite{CMS:generically.smooth.10}, both of these tools were \emph{coupled} with a \emph{maximal density drop} argument. This prevented us from estimating the singular set in high dimensions (as we do here) since it was hard to iterate the estimate in that form. Here, we develop new techniques that allow us to iterate the density drop argument at an earlier stage. This then allows us to obtain stronger results (analogous to the full dimensional range of  \cite{FigalliRosOtonSerra:obstacle.generic.regularity}). 
\end{remark}

\subsection{Organization} 
Section \ref{sec:defi} contains the basic definitions. In Section \ref{sec:singular.set.size} we estimate the dimension of the foliation singular strata and in Section \ref{theo:timestamp.regularity} we prove the super-linear separation estimates (even across non-conical scales). In Section \ref{sec:proof-main-cor} we combine these pieces to estimate the size of the singular strata of generic minimizers. Finally in Section \ref{theo:construct-foliation} we construct the foliations to which the previous results apply. 

\subsection{Acknowledgements} 
O.C. was supported by a Terman Fellowship and an NSF grant (DMS-2304432). C.M. was supported by an NSF grant (DMS-2147521). We are grateful to the referee for their careful reading and helpful suggestions. 

\section{Definitions}\label{sec:defi}

Let us collect the definitions we are going to use. Below, $U \subset \RR^{n+1}$ is open and $T$ is any minimizing integral $n$-current in $U$ (see \cite[\S 33]{Simon:GMT}, with $A=U$). 

\begin{remark} \label{rema:hypersurface.notation}
For notational simplicity, for minimizing integral $n$-currents $T$ of the form $\llbracket M \rrbracket$ for a smooth hypersurface $M$ with or without boundary we will use the definitions below with $M$ instead of with $\llbracket M \rrbracket$.
\end{remark}

\begin{definition} \label{defi:reg.sing}
	We denote 
	\begin{align*}
		\reg T = \{ \bx \in U \cap \spt T \setminus \spt \partial T : &\;  \spt T \cap B_r(\bx) \text{ is a smooth hypersurface} \\
			& \text{without boundary for some } r > 0 \},
	\end{align*}
	and
	\[ \sing T = U \cap \spt T \setminus (\spt \partial T \cup \reg T). \]
\end{definition}

In Section \ref{sec:timestamp.regularity} we will want to study subsets of $\reg T$ with effective regularity:

\begin{definition} \label{defi:reg.scale}
	For $\bx \in \reg T$, we define the regularity scale at $\bx$, $r_T(\bx) \in (0, 1]$, as the supremum of $r \in (0, 1)$ so that $\partial T = 0$ in $B_r(\bx)$ and $T \mres B_r(\bx)$ is supported on a smooth hypersurface with second fundamental form $|A| \leq r^{-1}$. For all other $\bx \in \spt T$, we set $r_T(\bx) = 0$. We also denote, for $\delta > 0$, the following effective portion of $\reg T$:
	\[ \cR_{\geq \delta}(T) = \{ \bx \in \reg T : r_T(\bx) \geq \delta \}. \]
\end{definition}

\begin{remark} \label{remark:reg.scale}
	One can show (\cite[Lemma 2.4]{CMS:generically.smooth.10}) that $r_T(\bx)$ is continuous in both $\bx$ and $T$, provided $T$ varies among minimizing integral $n$-currents with the flat distance and the Hausdorff distance on their boundaries (if the boundaries are nontrivial).
\end{remark}

In Theorem \ref{theo:singular.set.size} we will want to study refined subsets of $\sing T$ called singular strata. Note that $\sing T \subset \spt T \setminus \spt \partial T$ in Definition \ref{defi:reg.sing}. Since minimizing integral $n$-currents $T$ decompose locally away from $\spt \partial T$ into sums of integer multiples of minimizing boundaries (by \cite[\S 27]{Simon:GMT}) with pairwise disjoint supports (by \cite{Simon:smp}), in the rest of this section we take $T$ to be a minimizing \textit{boundary} in $U$ (see \cite[\S 37]{Simon:GMT}). For all other $T$, one combines the definitions by taking unions over all balls away from $\spt \partial T$.

It is well-known (see \cite[\S 35]{Simon:GMT}) that, when $T$ is a minimizing boundary, blow-ups at $\bx \in \sing T$ are $n$-dimensional minimizing \textit{cones} $\cC \subset \RR^{n+1}$.

\begin{definition} \label{defi:spine}
	The spine of a cone $\cC \subset \RR^{n+1}$ is the largest subspace $\Pi \subset \RR^{n+1}$ such that $\cC = \Pi \times \cC_0$ for a cone $\cC_0 \subset \RR^{n+1-k}$, $k = \dim \operatorname{spine} \cC$. Equivalently, $\Pi$ is the set of points under which $\cC$ is invariant by translation (see \cite[\S 3]{White:stratification}).
\end{definition}

\begin{definition} \label{defi:strata}
	For each $\ell \in \NN$, we define the $\ell$-th singular stratum of $T$ to be
	\[ \cS^\ell(T) = \{ \bx \in \sing T : \dim \operatorname{spine} \cC \leq \ell \text{ for all tangent cones } \cC \text{ of } T \text{ at } \bx \}. \]
\end{definition}

\begin{remark} \label{rema:strata}
	It is well-known (cf. \cite[\S 4]{White:stratification}) that
	\begin{enumerate}
		\item[(a)] $\dim_H \cS^\ell(T) \leq \ell$ for all $\ell \in \NN$, 
		\item[(b)] $\cS^0(T)$ is discrete, and
		\item[(c)] $\cS^\ell(T) = \emptyset$ for $\ell > n-7$.
	\end{enumerate}
	Note that (a) and (c) together imply the celebrated result that $\dim_H \sing T \leq n-7$.
\end{remark}

We will also need to study more effective subsets of the singular strata:

\begin{definition} \label{defi:effective.strata}
	For $\ell \in \NN$, $\eps > 0$, we also set
	\[ \cS^\ell_\eps(T) = \{ \bx \in \sing T : \text{all tangent cones } \cC \text{ of } T \text{ at } \bx \text{ are} \geq \eps \text{ from splitting an } \RR^{\ell+1} \}. \]
	That is,  $\bx \in \cS^\ell_\eps(T)$ if $\bx \in \sing T$ and each tangent cone $\cC$ of $T$ at $\bx$ satisfies
	\[ d_{B_1(\bOh)}(\cC, \llbracket \Pi \rrbracket \times \cC_0) \geq \eps \]
	for all $(\ell+1)$-dimensional subspaces $\Pi \subset \RR^{n+1}$ and all minimizing cones $\cC_0 \subset \RR^{n-\ell}$; here, $d_{B_1(\bOh)}$ denotes the flat metric for integral $n$-currents in $B_1(\bOh)$ (see \cite[\S 31]{Simon:GMT}).
\end{definition}

This definition is inspired by the quantitative strata defined by Cheeger--Naber \cite{CheegerNaber:harmonic.minimal} but is a distinct notion: we are only studying the symmetries at the tangent cone level, i.e., after blowing up, whereas the quantitative strata of Cheeger--Naber study the symmetries on intervals of scales before any blow-ups.

\begin{remark} \label{rema:effective.strata}
	Note that:
	\begin{enumerate}
		\item[(a)] $\cS^\ell_{\eps_2}(T) \subset \cS^\ell_{\eps_1}(T) \subset \cS^\ell(T)$ for all $0 < \eps_1 < \eps_2$, and
		\item[(b)] $\cS^\ell(T) = \cup_{\eps > 0} \cS^\ell_\eps(T)$.
	\end{enumerate}
\end{remark}

Finally, we will also need the following definition:

\begin{definition} \label{defi:cross.smoothly}
We say that $T$ and $T'$  \textit{cross smoothly} at $p \in \reg T \cap \reg T'$ if, for all sufficiently small $r > 0$, there are points of $\reg T'$ on both sides of $\reg T$ within $B_r(p)$ and vice versa; that is, for small enough $r > 0$ that $B_r(p) \setminus \reg T$ and $B_r(p) \setminus \reg T'$ each consist of pairs of components $U_\pm$ and $U_\pm'$, respectively, then the sets$$\reg T \cap U_+', \; \reg T \cap U_-', \; \reg T' \cap U_+, \; \reg T' \cap U_-$$ are all nonempty.
\end{definition}

\section{Proof of Theorem \ref{theo:singular.set.size}} \label{sec:singular.set.size}

\begin{lemma}\label{lemm:cone.split.pre}
Let $\gamma > 0$ and $\eps > 0$ be given. There exists $\eta = \eta(n, \gamma, \eps) \in (0, 1)$ with the following property. 

Consider any minimizing cone $\cC$ in $\RR^{n+1}$ with $\dim \operatorname{spine} \cC \leq \ell$, and that $\cC$ is $\geq \eps$ from splitting an $\RR^{\ell+1}$. Let $\cS \subset \bar B_1(\bOh)$ be the set of all points $\bx \in \bar B_1(\bOh) \cap \sing T$, where $T$ is any minimizing boundary in $\RR^{n+1}$ that does not cross $\cC$ smoothly, and where
\[
\Theta_{T}(\bx) \geq \Theta_\cC(\bOh) - \eta.
\]
Then, $\cS \subset U_\gamma(\Pi)$ for some $\leq \ell$-dimensional subspace $\Pi\subset \RR^{n+1}$.
\end{lemma}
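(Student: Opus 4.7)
The plan is a contradiction-and-compactness scheme in the spirit of Federer's dimension reduction and White's quantitative stratification. Suppose the conclusion fails: for some $\gamma, \eps > 0$ there are a sequence $\eta_i \searrow 0$ and minimizing cones $\cC_i$ satisfying the hypotheses such that the corresponding sets $\cS_i$ cannot be covered by $U_\gamma(\Pi)$ for any linear subspace $\Pi \subset \RR^{n+1}$ with $\dim \Pi \leq \ell$. An inductive selection then yields, for each $i$, points $\bx_i^{(1)}, \ldots, \bx_i^{(\ell+1)} \in \cS_i$ with each $\bx_i^{(j)}$ at distance $>\gamma$ from $\operatorname{span}(\bx_i^{(1)}, \ldots, \bx_i^{(j-1)})$, each witnessed by a minimizing boundary $T_i^{(j)}$ which does not cross $\cC_i$ smoothly and has $\bx_i^{(j)} \in \sing T_i^{(j)}$ with $\Theta_{T_i^{(j)}}(\bx_i^{(j)}) \geq \Theta_{\cC_i}(\bOh) - \eta_i$.

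Standard compactness of minimizing cones and minimizing boundaries, combined with compactness of $\bar B_1$, lets me pass to a subsequence so that $\cC_i \to \cC_\infty$ (still minimizing, still with $\dim \spine \cC_\infty \leq \ell$ and still $\geq \eps$ from splitting $\RR^{\ell+1}$), $T_i^{(j)} \to T_\infty^{(j)}$, and $\bx_i^{(j)} \to \bx_\infty^{(j)} \in \bar B_1$. Since smooth crossing is an open condition in the pair $(T, \cC)$, each $T_\infty^{(j)}$ does not cross $\cC_\infty$ smoothly. The spread condition persists, upper semicontinuity of density together with $\eta_i \to 0$ gives $\Theta_{T_\infty^{(j)}}(\bx_\infty^{(j)}) \geq \Theta_{\cC_\infty}(\bOh) > 1$, and continuity of the regularity scale (Remark~\ref{remark:reg.scale}) puts $\bx_\infty^{(j)} \in \sing T_\infty^{(j)}$.

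The decisive step is the rigidity claim that each $\bx_\infty^{(j)} \in \spine \cC_\infty$. Granted this, the $\ell+1$ linearly independent $\bx_\infty^{(j)}$ force $\dim \spine \cC_\infty \geq \ell+1$, contradicting the hypothesis. To prove the claim, I blow up $T_\infty^{(j)}$ at $\bx_\infty^{(j)}$ to a tangent cone $\cD$ with $\Theta_\cD(\bOh) \geq \Theta_{\cC_\infty}(\bOh)$, and simultaneously blow up $\cC_\infty$ at $\bx_\infty^{(j)}$ to a cone $\cE$ (trivial if $\bx_\infty^{(j)} \notin \spt \cC_\infty$). Non-crossing being scale-invariant, $\cD$ does not cross $\cE$ smoothly. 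When $\bx_\infty^{(j)} \in \spt \cC_\infty$, the strong maximum principle for minimizing currents~\cite{Simon:smp} combined with cone density monotonicity forces $\Theta_\cE(\bOh) = \Theta_{\cC_\infty}(\bOh)$, whence $\bx_\infty^{(j)} \in \spine \cC_\infty$ by the equality case of monotonicity.

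The main obstacle is the remaining case $\bx_\infty^{(j)} \notin \spt \cC_\infty$, where $\cE$ vanishes and imposes no direct constraint on $\cD$. In this case, $T_\infty^{(j)}$ lies strictly on one side of $\cC_\infty$ near $\bx_\infty^{(j)}$ by non-crossing, and the plan is to argue globally: either invoke a Hardt--Simon foliation-type structure on the complement of $\cC_\infty$ to conclude that any minimizing boundary strictly on one side of $\cC_\infty$ cannot carry a singularity of density $\geq \Theta_{\cC_\infty}(\bOh)$, or produce a smooth crossing of $T_\infty^{(j)}$ with $\cC_\infty$ elsewhere (for instance by comparing the asymptotic cone of $T_\infty^{(j)}$ at infinity with $\cC_\infty$) in contradiction with the hypothesis. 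Making this rigorous for an arbitrary minimizing cone $\cC_\infty$, while extracting the quantitative rate $\eta = \eta(n, \gamma, \eps)$, is the most delicate step of the argument.
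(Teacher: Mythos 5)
Your overall framework — a contradiction-and-compactness argument with $\eta_i \searrow 0$, passing to limits $\cC_i \to \cC_\infty$, $T_i^{(j)} \to T_\infty^{(j)}$, and using upper semicontinuity of density together with stability of the non-crossing condition — matches the paper's proof. But two differences deserve comment.

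First, a structural choice: you select $\ell+1$ points $\bx_i^{(1)},\ldots,\bx_i^{(\ell+1)}$ spread out by $\gamma$, pass to the limit, and argue that forcing all $\ell+1$ limits into $\operatorname{spine}\cC_\infty$ contradicts the spine bound. The paper takes a shorter path: it fixes $\Pi = \operatorname{spine}\cC_\infty$ up front (with $\dim\Pi \leq \ell$ since $\cC_\infty$ is still $\geq \eps$ from splitting an $\RR^{\ell+1}$), selects a \emph{single} point $\bx_j \in \cS_j \setminus U_\gamma(\Pi)$ for each $j$, and derives the contradiction from $\bx_\infty \in \Pi$ versus $d(\bx_\infty,\Pi)\geq\gamma$. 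Both perspectives are logically valid; the single-point version avoids the bookkeeping around the spread condition persisting in the limit and is the cleaner of the two.

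Second, and this is the substantive gap: the decisive rigidity step. Your claim that each $\bx_\infty^{(j)} \in \operatorname{spine}\cC_\infty$ (equivalently, that a minimizing boundary $T$ not crossing a minimizing cone $\cC$ smoothly, with $\Theta_T(\bx) \geq \Theta_\cC(\bOh)$, must satisfy $T = \cC$ and $\bx \in \operatorname{spine}\cC$) is exactly Proposition 3.3 of \cite{CMS:generically.smooth.10}, which the paper cites. Your attempt to re-derive it is incomplete in \emph{both} cases. In the case $\bx_\infty^{(j)} \in \spt\cC_\infty$, you invoke ``SMP combined with cone density monotonicity'' to force $\Theta_{\cC_\infty}(\bx_\infty^{(j)}) = \Theta_{\cC_\infty}(\bOh)$, but this step is not actually carried out: the strong maximum principle gives coincidence of supports near regular points where $T$ and $\cC$ touch, and it is not obvious how the density at $\bOh$ of the blown-up $\cD$ controls the density of $\cC_\infty$ at $\bx_\infty^{(j)}$ without an additional argument. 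In the case $\bx_\infty^{(j)} \notin \spt\cC_\infty$, you correctly identify a gap; this case is where most of the real content of \cite[Prop.~3.3]{CMS:generically.smooth.10} lives (it relies on Hardt--Simon foliation rigidity to rule out a minimizing boundary of high density strictly on one side of a cone), and neither of the two strategies you sketch is developed to the point where it would close. Finally, your concern about ``extracting the quantitative rate $\eta = \eta(n,\gamma,\eps)$'' at the end is misplaced: because the argument is by contradiction along $\eta_i\to 0$, the existence of some $\eta > 0$ comes for free from the compactness; no explicit rate is needed.
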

\begin{proof}
	Suppose, for contradiction, that this failed with $\eta = j^{-1}$, $j = 2, 3, \ldots$ and cones $\cC_j$. Passing to a subsequence (not labeled), we can assume that $\cC_j \to \cC$. Since $\cC$ is $\geq \eps$ from splitting an $\RR^{\ell+1}$, we have that $\dim \Pi \leq \ell$ for $\Pi : = \operatorname{spine} \cC$. 
	
	The contradiction hypothesis guarantees that for each $j$ there exist $T_j$, $\bx_j$ as above, with
	\begin{equation} \label{eq:cone.split.pre.contradiction.1}
		\Theta_{T_j}(\bx_j) \geq \Theta_{\cC_j}(\bOh) - j^{-1},
	\end{equation}
	\begin{equation} \label{eq:cone.split.pre.contradiction.2}
		\bx_j \not \in U_\gamma(\Pi). 
	\end{equation}
	By \eqref{eq:cone.split.pre.contradiction.1} and the upper semicontinuity of density, we have $\Theta_T(\bx) \geq \Theta_\cC(\bOh)$. Since $T$ does not smoothly cross $\cC$ (or else some $T_j$ would), \cite[Proposition 3.3]{CMS:generically.smooth.10} implies $T = \cC$ and $\bx \in \Pi$. On the other hand, by \eqref{eq:cone.split.pre.contradiction.2} implies that $\bx \not \in U_\gamma(\Pi)$, a contradiction.
\end{proof}

\begin{proof}[Proof of Theorem \ref{theo:singular.set.size}]
	We may suppose that $U = B_1(\bOh)$ since Hausdorff dimension upper bounds are preserved under countable unions and scaling.
	
	The theorem will follow if we show that, for all $\delta > 0$,
	\begin{equation} \label{eq:singular.set.size.reduction.delta}
		\cH^{\ell+\delta}(\cS^\ell(\mathscr{F})) = 0.
	\end{equation}
	So let's fix $\delta > 0$ going forward. 
	
	We will need the $\infty$-approximation to Hausdorff measure $\cH^{d}$, for $d > 0$ real, denoted $\cH^{d}_\infty$. It is defined for all $A \subset \RR^{n+1}$ by $\cH^{d}_\infty(A) = \inf \{ \omega_{d} \sum_{j=1}^\infty (\tfrac12 \diam C_j)^{d} \}$, where the $\inf$ is taken among all covers $\{ C_j \}_{j=1,2,\ldots}$ of $A$ and $\omega_d$ is usually taken to be the volume of the unit $d$-ball when $d$ is an integer and its analytic extension to all $d > 0$ via the $\Gamma$ function, though the particular choice doesn't matter (see \cite[\S 2]{Simon:GMT}). 
	
	Note that if $\Pi \subset \RR^{n+1}$ is any subspace with $\dim \Pi \leq \ell$, then for $\gamma > 0$,
	\begin{equation} \label{eq:singular.set.size.growth}
		\cH^{\ell+\delta}_\infty(U_{2\gamma}(\Pi) \cap B_1(\bOh)) \leq C_{n,\ell,\delta} \gamma^\delta \text{ for all } \gamma > 0;
	\end{equation}
	this can be seen, e.g., by constructing an explicit covering of $U_{2\gamma}(\Pi) \cap B_1(\bOh)$. Now fix $\gamma \in (0, 1)$, depending only on $n$, $\ell$, $\delta$, so that
	\begin{equation} \label{eq:singular.set.size.gamma}
		C_{n,\ell,\delta} \gamma^\delta \leq \tfrac12 \cdot 2^{-\ell-\delta} \omega_{\ell+\delta}.
	\end{equation}
	
	Suppose, for contradiction, that \eqref{eq:singular.set.size.reduction.delta} fails. By Remark \ref{rema:effective.strata}, the set $$\cS^\ell_\eps(\mathscr{F}) = \cup_{T \in \mathscr{F}} \cS^\ell_\eps(T)$$ would then satisfy
	\begin{equation} \label{eq:singular.set.size.reduction.delta.eps}
		\cH^{\ell+\delta}(\cS^\ell_\eps(\mathscr{F})) > 0,
	\end{equation}
	for some $\eps > 0$, which we also fix. This now determines $\eta = \eta(n, \gamma, \eps)$ per Lemma \ref{lemm:cone.split.pre}. Using this $\eta$, define, for $k \in \NN$,
	\[
		\cS^{\ell,k}_{\eps}(\mathscr{F}) : = \cup_{T \in \mathscr{F}} \{ \by \in \cS^\ell_{\eps}(\mathscr{F}) \cap \sing T : 1+k\eta \leq \Theta_T(\by) < 1 + (k+1)\eta\}
	\]
	so that,
	\begin{equation} \label{eq:singular.set.size.decomposition}
		\cS^\ell_\eps(\mathscr{F}) = \cup_{k=0}^\infty \cS^{\ell,k}_{\eps}(\mathscr{F}).
	\end{equation}
	It follows from \eqref{eq:singular.set.size.reduction.delta.eps} and \eqref{eq:singular.set.size.decomposition} that, for some $k \in \NN$,
	\begin{equation} \label{eq:singular.set.size.reduction.delta.eps.k}
		\cH^{\ell+\delta}(\cS^{\ell,k}_{\eps}(\mathscr{F})) > 0.
	\end{equation}
	Since $U = B_1(\bOh)$ is bounded, \cite[3.6 (2)]{Simon:GMT} applies and guarantees that
	\[ \limsup_{\lambda \to 0} \frac{\cH^{\ell+\delta}_\infty(\cS^{\ell,k}_\eps(\mathscr{F}) \cap B_\lambda(\bx))}{\omega_{\ell+\delta} \lambda^{\ell+\delta}} \geq 2^{-\ell-\delta} \text{ for } \cH^{\ell+\delta} \text{ a.e.\ } \bx \in \cS^{\ell,k}_\eps(\mathscr{F}). \]
	Fix any $\bx$ as above. Then there is a sequence $\lambda_i \to 0$ such that
	\begin{equation} \label{eq:singular.set.size.measure}
		\lim_i \lambda_i^{-\ell-\delta} \cH^{\ell+\delta}_\infty(\cS^{\ell,k}_\eps(\mathscr{F}) \cap B_{\lambda_i}(\bx)) \geq 2^{-\ell-\delta} \omega_{\ell+\delta}.
	\end{equation}
	Since $\bx \in \cS^{\ell}_\eps(T)$ for some $T \in \mathscr{F}$, after passing to a subsequence (not labeled) we have
	\[ (\eta_{\bx, \lambda_i})_{\#} T \to \cC, \]
	for a minimizing cone $\cC$ that's $\geq \eps$ from splitting a $\RR^{\ell+1}$. Choose a $\leq \ell$-dimensional subspace $\Pi$ by applying Lemma \ref{lemm:cone.split.pre} to $\cC$ (with $\gamma,\eps$ as fixed above).
	
	We claim that, for $i$ sufficiently large,
	\begin{equation} \label{eq:singular.set.size.claim}
		\lambda_i^{-1} (\cS^{\ell,k}_\eps(\mathscr{F}) - \bx) \cap B_1(\bOh) \subset U_{2\gamma}(\Pi),
	\end{equation}
	Indeed, if we show this, then \eqref{eq:singular.set.size.growth} and \eqref{eq:singular.set.size.gamma} imply
	\[ \cH^{\ell+\delta}_\infty(\lambda_i^{-1} (\cS^{\ell,k}_\eps(\mathscr{F}) - \bx) \cap B_1(\bOh)) \leq \cH^{\ell+\delta}_\infty(U_{2\gamma}(\Pi) \cap B_1(\bOh)) \leq \tfrac12 \cdot 2^{-\ell-\delta} \omega_{\gamma+\delta}, \]
	in contradiction to \eqref{eq:singular.set.size.measure}. 
	
	It remains to verify \eqref{eq:singular.set.size.claim}. Suppose it failed with $i \to \infty$. Then, there would exist
	\begin{equation} \label{eq:choice-y-i-sing-set-size-claim}\by_i \in \lambda_i^{-1} (\cS^{\ell,k}_\eps(\mathscr{F}) - \bx) \cap B_1(\bOh) \setminus U_{2\gamma}(\Pi). \end{equation}
	By our definition of $\cS^{\ell,k}_\eps(\mathscr{F})$, we have $\by_i \in \sing (\eta_{\bx,\lambda_i})_\#T_i$ for some $T_i \in \mathscr{F}$ and
	\[ \Theta_{(\eta_{\bx,\lambda_i})_\# T_i}(\by_i) \geq 1 + k\eta \geq \Theta_\cC(0) - \eta. \]
	Passing to a subsequence we find $(\eta_{\bx,\lambda_i})_\# T_i\to T$ a minimizing boundary in $\RR^{n+1}$ which does not cross $\cC$ smoothly (otherwise, some $T_i$ would cross $T$ smoothly, which is impossible since elements of $\mathscr{F}$ have pairwise disjoint supports) and $\by_i\to \by$ with
	\[
	\Theta_T(\by) \geq 1+k\eta \geq \Theta_\cC(\bOh) - \eta.
	\] 
	By choice of $\Pi$ above---based on Lemma \ref{lemm:cone.split.pre}---we find that $\by \in U_{\gamma}(\Pi)$. This contradicts the choice of $\by_i$ in \eqref{eq:choice-y-i-sing-set-size-claim}. 
\end{proof}

\section{Proof of Theorem \ref{theo:timestamp.regularity}} \label{sec:timestamp.regularity}

\begin{lemma} \label{lemm:nontrivial.reg}
	There exists $\rho_n > 0$ with the following property.
	
	If $T$ is a minimizing boundary in $B_2(\bOh) \subset \RR^{n+1}$, and $\spt T \cap \bar B_{1/2}(\bOh) \neq \emptyset$, then
	\[ \cR_{\geq \rho_n}(T) \cap \partial B_1(\bOh) \neq \emptyset. \]
\end{lemma}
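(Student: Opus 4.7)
The plan is to argue by contradiction using the compactness theorem for minimizing boundaries. Suppose the lemma fails, so there is a sequence $T_j$ of minimizing boundaries in $B_2(\bOh)$ with $\spt T_j \cap \bar B_{1/2}(\bOh) \neq \emptyset$ and $\cR_{\geq 1/j}(T_j) \cap \partial B_1(\bOh) = \emptyset$. The monotonicity formula at a point of $\spt T_j \cap \bar B_{1/2}(\bOh)$ yields uniform mass bounds on compact subsets of $B_2(\bOh)$, so a subsequence converges in the flat topology to a minimizing boundary $T$ in $B_2(\bOh)$, and the density lower bound passes to the limit, giving $\spt T \cap \bar B_{1/2}(\bOh) \neq \emptyset$. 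I would next verify $\spt T \cap \partial B_1(\bOh) \neq \emptyset$ by a no-gap argument: otherwise $\spt T \subset B_1(\bOh) \cup (B_2(\bOh) \setminus \bar B_1(\bOh))$, and then $T - T \mres B_1(\bOh)$ is an admissible competitor (the slice $\partial(T \mres B_1(\bOh))$ vanishes since $\spt T$ misses $\partial B_1(\bOh)$), so minimality forces $T \mres B_1(\bOh) = 0$; the analogous argument on the other side gives $T = 0$, contradicting $\spt T \cap \bar B_{1/2}(\bOh) \neq \emptyset$.

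The key step is exhibiting a regular point $p \in \reg T \cap \partial B_1(\bOh)$ at which $\spt T$ meets $\partial B_1(\bOh)$ transversally. Pick any $q \in \spt T \cap \partial B_1(\bOh)$ and consider a tangent cone $\cC$ of $T$ at $q$. Either $\cC$ is a hyperplane and $q \in \reg T$ by Allard regularity, or $\cC$ is a proper minimizing cone in which case $\cC \cap T_q \partial B_1(\bOh)$ is $(n-1)$-dimensional, so $\spt T \cap \partial B_1(\bOh)$ has Hausdorff dimension at least $n-1$ near $q$; combined with the bound $\dim_H \sing T \leq n-7 < n-1$, this gives $\reg T \cap \partial B_1(\bOh) \neq \emptyset$ near $q$. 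If $\spt T$ were tangent to $\partial B_1(\bOh)$ at every $p' \in \reg T \cap \partial B_1(\bOh)$ near $q$, then writing $\spt T$ and $\partial B_1(\bOh)$ as graphs $u, v$ of analytic functions over their common tangent plane at $p'$, the mean curvature comparison ($H(u) \equiv 0$ against $H(v) \equiv n$) forces $u - v$ to vanish to order exactly two at $p'$ with sign-definite Hessian, so $\reg T \cap \partial B_1(\bOh)$ is locally isolated at $p'$, contradicting its $\geq (n-1)$-dimensional density near $q$.

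With such regular, transverse $p$ in hand, $r_T(p) > 0$, and the continuity of $r_T$ in both $T$ and $x$ (Remark \ref{remark:reg.scale}) together with the Allard-type smooth convergence $\spt T_j \to \spt T$ on compacta of $\reg T$ imply that $\spt T_j$ is $C^1$-close to $\spt T$ on $B_{r_T(p)/2}(p)$ for $j$ large. Transversality is stable under $C^1$ perturbations, so the implicit function theorem yields $p_j \in \reg T_j \cap \partial B_1(\bOh)$ near $p$ with $r_{T_j}(p_j) \geq r_T(p)/2$ for all large $j$, contradicting $\cR_{\geq 1/j}(T_j) \cap \partial B_1(\bOh) = \emptyset$. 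The main obstacle is the second paragraph: producing a \emph{transverse} regular intersection requires combining the tangent cone dimension count (from interior regularity) with a maximum-principle argument that exploits minimality of $T$ against the non-minimality of the sphere $\partial B_1(\bOh)$.
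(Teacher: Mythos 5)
Your overall strategy is the same as the paper's: compactness under the contradiction hypothesis, extraction of a limiting minimizing boundary $T$ with $\spt T\cap \bar B_{1/2}\neq\emptyset$ and $\spt T\cap\partial B_1\neq\emptyset$, and then a contradiction via the continuity of the regularity scale (your ``no-gap'' slicing argument for $\spt T\cap\partial B_1\neq\emptyset$ is a perfectly fine substitute for the paper's appeal to monotonicity). You have also correctly spotted a point that the paper's terse write-up leaves implicit --- namely that once you have a regular point $p\in\reg T\cap\partial B_1$, you still need a reason that $\spt T_j$ actually meets $\partial B_1$ near $p$ with controlled regularity scale, since the approximating points $p_j\in\spt T_j$ furnished by Remark \ref{remark:reg.scale} need not lie on the sphere. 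That instinct is good. However, your resolution via transverse intersection is both unnecessary and, as written, contains gaps.

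On the gaps: the assertion that $\cC\cap T_q\partial B_1$ being $(n-1)$-dimensional forces $\spt T\cap\partial B_1$ to have Hausdorff dimension at least $n-1$ near $q$ is an unjustified leap; a tangent cone only constrains the blow-up limit, and passing dimension information from $\cC\cap T_q\partial B_1$ back to the actual intersection $\spt T\cap\partial B_1$ is itself a transversality statement of the very kind you are trying to prove. (You have also not argued \emph{why} $\cC\cap T_q\partial B_1$ is $(n-1)$-dimensional when $\cC$ is nonflat; the fact that a nonflat minimizing cone cannot lie in a closed half-space only gives that $\cC$ meets both open half-spaces of $T_q\partial B_1$, not immediately that the intersection is codimension one in the plane.) Separately, in the tangential case, the conclusion that $u-v$ has a sign-definite Hessian at $p'$ does not follow from the mean-curvature comparison: $\Delta(u-v)<0$ controls the trace of the Hessian, not its signature, so ``locally isolated'' is not a valid inference and the saddle case (which produces a codimension-one zero set) is not excluded.

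The cleaner route, and the one that the paper's citation of Remark \ref{remark:reg.scale} is implicitly invoking, bypasses transversality entirely. From your slicing argument $\spt T$ has points in $B_1$ and points in $B_2\setminus\bar B_1$ (the slice $\langle T,|x|,r\rangle$ cannot vanish for all $r$ near $1$), and since $\dim_H\sing T\leq n-7\leq n-2$, the regular set $\reg T$ is connected on the relevant component. Choose a path $\gamma$ in $\reg T$ from a point with $|\gamma(0)|<1$ to a point with $|\gamma(1)|>1$; on the compact image $\gamma([0,1])$ the regularity scale $r_T$ is bounded below by some $3c>0$. By Allard-type smooth convergence on compact subsets of $\reg T$, for $j$ large there is a nearby path $\gamma_j$ in $\reg T_j$ with $r_{T_j}\geq 2c$ along $\gamma_j$ (using the continuity of $r_T(\bx)$ in $(T,\bx)$), and with $|\gamma_j(0)|<1<|\gamma_j(1)|$. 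The intermediate value theorem then gives $\gamma_j(t_j)\in\cR_{\geq 2c}(T_j)\cap\partial B_1$, contradicting $\cR_{\geq 1/j}(T_j)\cap\partial B_1=\emptyset$ as soon as $1/j<2c$. This requires no transversality and no tangent-cone analysis at points of $\spt T\cap\partial B_1$.
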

\begin{proof}
	Suppose, for contradiction, that for each $j = 1, 2, \ldots$ we could find $T_j$ as above, except with
	\begin{equation} \label{eq:nontrivial.reg.contradiction}
		\cR_{\geq 1/j}(T_j) \cap \partial B_1(\bOh) = \emptyset.
	\end{equation}
	We can pass to a subsequence (not denoted) along which $T_j \to T$, a minimizing boundary with
	\[
	\spt T \cap \bar B_{1/2}(\bOh) \neq \emptyset\,.
	\]
	Note that $\spt T \cap \partial B_1(\bOh) \neq \emptyset$ by monotonicity, and thus $\cup_{j = 1, 2, \ldots} \cR_{\geq 1/j}(T) \cap \partial B_1(\bOh) \neq \emptyset$ since $\dim_H \sing T \leq n-7$. In particular, we must have $\cR_{1/j}(T) \cap \partial B_1(\bOh) \neq \emptyset$ for some $j$, contradicting \eqref{eq:nontrivial.reg.contradiction} and Remark \ref{remark:reg.scale}.
\end{proof}

\begin{lemma} \label{lemm:separation.control}
	Let $A > 1$, $\rho \in (0, \rho_n)$ be given, with $\rho_n$ as in Lemma \ref{lemm:nontrivial.reg}. There exists $L = L(n, A, \rho)$ with the following properties.
	
	Take a minimizing boundary $T$ in $B_{2A}(\bOh)$ with $\bOh \in \spt T$. Then, for every minimizing boundary $T'$ in $B_{2A}(\bOh)$ not crossing $T$ smoothly, and with $\spt T' \cap \bar B_{1/2}(\bOh) \neq \emptyset$,
	\[ d(\cR_{\geq A \rho}(T) \cap \partial B_A(\bOh), \spt T') \leq L \cdot d(\cR_{\geq \rho}(T) \cap \partial B_1(\bOh), \spt T'). \]
\end{lemma}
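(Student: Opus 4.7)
The plan is to argue by contradiction, reducing to a limit in which the separation function of $T'$ from $T$, suitably renormalized, converges to a nontrivial nonnegative Jacobi field on the regular part of the limit; the Harnack inequality for such fields then prevents the ratio $D/d$ from blowing up.

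Suppose the lemma fails. Then there exist $L_j \to \infty$ and minimizing boundaries $T_j, T_j'$ in $B_{2A}(\bOh)$ satisfying all the hypotheses, with
\begin{equation*}
D_j := d(\cR_{\geq A\rho}(T_j) \cap \partial B_A(\bOh), \spt T_j') > L_j \cdot d_j, \qquad d_j := d(\cR_{\geq \rho}(T_j) \cap \partial B_1(\bOh), \spt T_j').
\end{equation*}
Since $D_j \leq \diam B_{2A}$, we must have $d_j \to 0$. I would extract subsequential limits $T_j \to T$ and $T_j' \to T'$, minimizing boundaries in $B_{2A}(\bOh)$ (flat convergence, Hausdorff convergence of supports, $C^{1,\alpha}$ convergence on regular parts); all hypotheses pass to the limit, in particular non-smooth-crossing. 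Picking $\bx_j \in \cR_{\geq \rho}(T_j) \cap \partial B_1(\bOh)$ with $d(\bx_j, \spt T_j') \to 0$ and extracting $\bx_j \to \bx_\infty$, Remark \ref{remark:reg.scale} and Hausdorff convergence give $\bx_\infty \in \cR_{\geq \rho}(T) \cap \partial B_1(\bOh) \cap \spt T'$. The Solomon--White strong maximum principle at the regular point $\bx_\infty$ then forces $\spt T = \spt T'$ in a neighborhood; by an open/closed propagation on $\reg T$ (whose singular set has codimension $\geq 7$, so the connected component of $\reg T$ through $\bx_\infty$ is open in $\spt T$), this coincidence extends to the connected component of $\reg T$ through $\bx_\infty$, which one verifies contains the relevant effective regular sets at both scales.

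For $j$ large, $T_j'$ is then graphical over $T_j$ on a fixed compact subset $K$ of the above component, with graph function $u_j$ of definite sign (nonnegative, say) by non-smooth-crossing. The $u_j$ satisfies a uniformly elliptic, linearized minimal surface equation with bounds depending only on $\rho$. Normalize $\hat u_j := u_j / M_j$, where $M_j := \sup_K u_j \to 0$; then $\hat u_j \in [0,1]$, and along a subsequence $\hat u_j \to \phi$ in $C^2_{\mathrm{loc}}$, a nonzero, nonnegative Jacobi field on $\reg T \cap K$. By the Harnack inequality for nonnegative solutions of uniformly elliptic equations, $\inf_K \phi > 0$, with constants depending only on $n, A, \rho$. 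Translating back, $d_j / M_j \to \inf_{\cR_{\geq \rho}(T) \cap \partial B_1} \phi > 0$ while $D_j / M_j \to \sup_{\cR_{\geq A\rho}(T) \cap \partial B_A} \phi < \infty$, so $D_j / d_j$ stays bounded by a constant depending only on $n, A, \rho$, contradicting $L_j \to \infty$.

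The main obstacle is establishing the Harnack chain with length and constants uniform in $j$. This requires the effective regular set of the limiting $T$ to contain a connected piece joining the two scales, which follows from compactness of the minimizing-boundary class (with $\bOh \in \spt$) together with Lemma \ref{lemm:nontrivial.reg} applied at both scales $1$ and $A$ (the latter via rescaling by $1/A$ to bring us back to the ball $B_2$ in which Lemma \ref{lemm:nontrivial.reg} is stated). The $C^{1,\alpha}$ convergence on regular parts then transfers such a chain from $T$ back to $T_j$ for large $j$, completing the argument.
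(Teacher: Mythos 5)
Your proposal is correct and follows essentially the same contradiction-and-compactness strategy as the paper: extract limits, linearize to a nonnegative limiting Jacobi field on $\reg T$, and contradict the unbounded ratio $D_j/d_j$. The only cosmetic difference is the final blow: the paper normalizes the separation by $D_j$ itself, producing a nonnegative Jacobi field that vanishes at the limit $\bx$ of the minimizing points on $\partial B_1$ yet is $\geq 1$ at a point of $\cR_{\geq A\rho}(T)\cap\partial B_A$, and invokes the strong maximum principle on the (WLOG connected) regular part, whereas you normalize by $\sup_K u_j$ and invoke Harnack; these are two faces of the same elliptic estimate and neither buys anything essential over the other. (A tiny inaccuracy in your last step: $D_j$ is an infimum of distances, so $D_j/M_j$ tends to $\inf_{\cR_{\geq A\rho}(T)\cap\partial B_A}\phi$ rather than $\sup$; since you only need an upper bound this does not affect the argument.)
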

\begin{proof}
	Without loss of generality, we may suppose $T$, $T'$ have connected supports. 
	
	Suppose, for contradiction, that for each $j = 1, 2, \ldots$ we could find $T_j, T_j'$ as above, except with
	\begin{equation} \label{eq:separation.control.contradiction}
		j \cdot d(\cR_{\geq \rho}(T_j) \cap \partial B_1(\bOh), \spt T_j') < d(\cR_{\geq A \rho}(T_j) \cap \partial B_A(\bOh), \spt T_j').
	\end{equation}
	Since the right hand side is uniformly bounded from above, we can pass to a subsequence (not denoted) along which $T_j, T_j' \to T$, a minimizing boundary with $\bOh \in \spt T$. 
	
	For each $j$, let $\bx_j \in \cR_{\geq \rho}(T_j)$ be the point on $\spt T_j$ attaining the distance on the left hand side of \eqref{eq:separation.control.contradiction}. Passing to a further subsequence (not labeled), $\bx_j \to \bx \in \cR_{\geq \rho}(T)$ by Remark \ref{remark:reg.scale}. Then, by renormalizing by the right hand side of \eqref{eq:separation.control.contradiction}, we obtain a nonnegative Jacobi field on $\reg T$ that equals zero at $\bx$. 
	
	On the other hand, by Lemma \ref{lemm:nontrivial.reg}, $\reg T$ also contains points in $\cR_{\geq A \rho}(\cdot) \cap \partial B_A(\bOh)$, and the limiting Jacobi field isn't everywhere zero on the component by \eqref{eq:separation.control.contradiction}. This contradicts the maximum principle.
\end{proof}

\begin{lemma} \label{lemm:jacobi.field.growth}
	For every nonflat minimizing cone $\cC$ in $\RR^{n+1}$, every positive Jacobi field $u$ on $\reg \cC$, every $r \in [1, \infty)$, and every $\rho \in (0, \rho_n)$ with $\rho_n$ as in Lemma \ref{lemm:nontrivial.reg}:
	\[ \sup_{\cR_{\geq r \rho}(\cC) \cap \partial B_r(\bOh)} u \leq H r^{-\kappa_n} \inf_{\cR_{\geq \rho}(\cC) \cap \partial B_1(\bOh)} u, \]
	where $H = H(n, \rho)$.
\end{lemma}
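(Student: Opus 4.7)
The proof uses separation of variables on the cone together with comparison to an explicit principal Jacobi field. Writing $x = r\omega$ with $r > 0$ and $\omega$ on the link $\Sigma := \cC \cap \SS^n$, the Jacobi operator on $\reg\cC$ separates as
\[ L_\cC = \partial_r^2 + \frac{n-1}{r}\partial_r + \frac{1}{r^2}\tilde L_\Sigma, \qquad \tilde L_\Sigma := \Delta_\Sigma + |A_\Sigma|^2. \]
Pure modes $u = r^{-\alpha}\phi(\omega)$ with $\tilde L_\Sigma\phi = \mu\phi$ solve $L_\cC u = 0$ iff $\alpha^2 - (n-2)\alpha + \mu = 0$; direct computation verifies that $\kappa_n$ from \eqref{eq:kappa.n} is the smaller indicial root at the critical value $\mu = n-1$.

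The key analytic input, from \cite{Simon:decay, Wang:smoothing.cones}, is that on any nonflat minimizing cone the principal positive eigenfunction $\phi_* > 0$ of $\tilde L_\Sigma$ on $\reg\Sigma$ exists with eigenvalue $\mu_* \geq n-1$. Setting $\alpha_* := \tfrac{n-2}{2} - \sqrt{\tfrac{(n-2)^2}{4} - \mu_*}$ (so $\alpha_* \geq \kappa_n$), the explicit $\Psi_*(r,\omega) := r^{-\alpha_*}\phi_*(\omega)$ is a positive Jacobi field on $\reg\cC$ that decays at least like $r^{-\kappa_n}$. Moreover $\Psi_*$ serves as a barrier for every other positive Jacobi field $u$: the ratio $w := u/\Psi_*$ satisfies the homogeneous linear elliptic equation $\Delta_\cC w + 2\nabla\log\Psi_* \cdot \nabla w = 0$ on $\reg\cC$, with no zero-order term. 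Since $\sing\cC$ has codimension $\geq 7$ it is removable for this equation, and since $u$ asymptotically matches the principal mode at infinity (a consequence of $\mu_* \geq n-1$ and positivity), $w$ is bounded at infinity; the maximum principle then yields $\sup_{\reg\cC \cap (\bar B_r \setminus B_1)} w \leq C \sup_{\reg\cC \cap \partial B_1} w$ for every $r \geq 1$.

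Fix now $\rho \in (0, \rho_n)$. Harnack applied to $w$ on the effective regular set $\cR_{\geq\rho}(\cC) \cap (B_2(\bOh) \setminus B_{1/2}(\bOh))$—whose geometry depends only on $n$ and $\rho$ by scale invariance of the cone—bounds $\sup w$ there by $C(n,\rho) \inf w$. Since $\Psi_*$ is bounded above and below by constants depending only on $n, \rho$ on $\cR_{\geq\rho}(\cC) \cap \partial B_1$, translating $\sup w \leq C \inf w$ back into an estimate on $u = w\Psi_*$ and using $\Psi_*|_{\partial B_r} \leq r^{-\kappa_n}\|\phi_*\|_\infty$ (valid since $\alpha_* \geq \kappa_n$ and $r \geq 1$) yields the lemma with $H = H(n,\rho)$.

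The main obstacle is the principal eigenvalue bound $\mu_* \geq n-1$ on minimizing cones, which uses the full minimizing property of $\cC$ (beyond mere stability) and is the content of the cited works; it is handled there via Hardy-type estimates on the cone together with a careful barrier analysis at singular points. A secondary subtlety is defining $\phi_*$ and justifying the maximum principle on the possibly singular link $\Sigma$, both of which are controlled by the codimension bound $\dim_H \sing\cC \leq n-7$.
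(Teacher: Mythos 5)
The overall strategy---compare $u$ to the principal homogeneous Jacobi field $r^{-\alpha_*}\phi_*$ and run a Harnack argument on the effective regular set---is in the right spirit, and you correctly identify the input from \cite{Simon:decay, Wang:smoothing.cones} (that the top eigenvalue of $\Delta_\Sigma + |A_\Sigma|^2$ on the link of a nonflat minimizing cone is $\geq n-1$, forcing $\alpha_* \geq \kappa_n$). The paper's own proof is a one-line deferral to \cite[Corollary 3.11]{CMS:generically.smooth.10}, so there is no in-text argument to compare against; the typical route (as in Simon's asymptotic decay lemma) is to project onto $\phi_*$, i.e. set $\bar u(r) := \int_\Sigma u(r,\cdot)\phi_*$, observe that $\bar u$ solves the Euler ODE $\bar u'' + \tfrac{n-1}{r}\bar u' + \tfrac{\mu_*}{r^2}\bar u = 0$, use positivity of $u$ on \emph{all} of $(0,\infty)$ to force both modal coefficients of $\bar u$ to be nonnegative (hence $\bar u(r) \leq r^{-\kappa_n}\bar u(1)$), and then convert between $\bar u$, $\sup u$, $\inf u$ by Harnack at each scale.

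There is a genuine gap in your conjugation step. Even granting that $w := u/\Psi_*$ solves the drift equation $\Div(\Psi_*^2 \nabla w) = 0$ and is bounded at infinity, the maximum principle in an exterior domain does \emph{not} give $\sup_{\bar B_r\setminus B_1} w \leq C\sup_{\partial B_1}w$: the bounded harmonic function $w(x) = 1 - |x|^{2-n}$ on $\RR^n\setminus B_1$ vanishes on $\partial B_1$ yet is strictly positive outside. To exclude this behavior you must control $w$ as $r\to\infty$ by its values on $\partial B_1$, which is essentially the inequality you are trying to prove. The phrase ``$u$ asymptotically matches the principal mode at infinity (a consequence of $\mu_*\geq n-1$ and positivity)'' is circular as a justification for this point. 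What actually closes the argument is that $u$ is assumed positive on all of $\reg\cC$, i.e.\ also for $r\in(0,1)$; that near-vertex positivity is what kills the modal coefficient that would otherwise let $w$ exceed its inner boundary values at infinity. Your proposal never uses $r<1$.

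A secondary issue: the removability of $\sing\cC$ for $\Div(\Psi_*^2\nabla w)=0$ is asserted from the codimension bound alone, but the drift $\nabla\log\Psi_*$ need not stay bounded near $\sing\cC$ (nothing guarantees $\phi_*$ is bounded away from $0$ and $\infty$ near $\sing\Sigma$), so the standard capacity argument does not apply verbatim. The $\phi_*$-weighted projection avoids this by never leaving the regular part except to integrate.
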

\begin{proof}
	This follows from our proof of \cite[Corollary 3.11]{CMS:generically.smooth.10}.
\end{proof}

\begin{lemma} \label{lemm:conical.decay}
	Let $\lambda \in (0, \kappa_n+1)$, $\rho \in (0, \rho_n)$ be given, with $\rho_n$ as in Lemma \ref{lemm:nontrivial.reg}. There exist $\delta = \delta(n, \lambda, \rho) \in (0, \tfrac12)$, $A = A(n, \lambda, \rho) \in (1, (2\delta)^{-1})$ with the following property.
	
	Consider any minimizing boundary $T$ in $B_{\delta^{-1}}(\bOh)$, with $\bOh \in \sing T$ satisfying
	\[ \Theta_T(\bOh, 1) \geq \Theta_T(\bOh, 2) - \delta. \]
	Then, for every minimizing boundary $T'$ in $B_{\delta^{-1}}(\bOh)$ not crossing $T$ smoothly, and with $\spt T' \cap \bar B_\delta(\bOh) \neq \emptyset$, we also have:
	\[ A^{-1} d(\cR_{\geq A \rho}(T) \cap \partial B_{A}(\bOh), \spt T') \leq A^{-\lambda} d(\cR_{\geq \rho}(T) \cap \partial B_1(\bOh), \spt T'), \]
	and all sets above are nonempty.
\end{lemma}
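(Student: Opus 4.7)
The strategy is to argue by contradiction: blow up a purported counterexample to a non-flat minimizing cone $\cC$, extract a positive Jacobi field on $\reg \cC$ from the normalized separation of $T$ and $T'$, and use the decay in Lemma \ref{lemm:jacobi.field.growth} to violate the hypothesized lower bound at scale $A$. Concretely, since $\lambda < \kappa_n + 1$, I would first fix $A = A(n,\lambda,\rho) > 1$ so large that $A^{\kappa_n + 1 - \lambda} > 2H$, with $H = H(n,\rho)$ from Lemma \ref{lemm:jacobi.field.growth}. Supposing no $\delta$ works, one has $\delta_j \to 0$ and $T_j, T_j'$ satisfying the hypotheses while $d_{A,j} > A^{1-\lambda}\, d_{1,j}$, where $d_{r,j} := d(\cR_{\geq r\rho}(T_j) \cap \partial B_r(\bOh), \spt T_j')$ (nonemptiness of these sets comes from Lemma \ref{lemm:nontrivial.reg} applied to $T_j$ and to its $A^{-1}$-rescaling, once $\delta_j < (2A)^{-1}$).

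Next I would take subsequential limits. Compactness gives $T_j \to \cC$, a minimizing boundary on $\RR^{n+1}$. The vanishing density drop and upper semicontinuity of density force $\Theta_\cC(\bOh, \cdot)$ constant on $[1,2]$, so the monotonicity formula forces $\cC$ to be a cone at $\bOh$; Allard's dimensional density gap (applicable since $\bOh \in \sing T_j$) combined with upper semicontinuity rules out $\cC$ flat, so $\cC$ is non-flat. Similarly extract $T_j' \to T_\infty'$; the conditions $\bOh \in \spt T_\infty'$ and non-smooth-crossing against $\cC$ pass to the limit. One now argues $T_\infty' = \cC$ as follows: the tangent cone of $T_\infty'$ at $\bOh$ is a minimizing cone that does not smoothly cross $\cC$, so by the strong maximum principle at a regular point of $\cC$ it must equal $\cC$, giving $\Theta_{T_\infty'}(\bOh) = \Theta_\cC(\bOh)$; Proposition 3.3 of \cite{CMS:generically.smooth.10} (as used in the proof of Lemma \ref{lemm:cone.split.pre}) then yields $T_\infty' = \cC$, and in particular $d_{1,j}, d_{A,j} \to 0$.

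With $T_j, T_j' \to \cC$, on each compact $K \subset \reg \cC$ the convergence $T_j \to \cC$ is smooth and eventually $T_j'$ sits as a non-negative normal graph of $u_j$ over $T_j$ solving the minimal surface equation. Setting $v_j := u_j/d_{1,j}$, the distance-realizer definition of $d_{r,j}$ gives $\inf v_j \to 1$ on $\cR_{\geq \rho}(T_j) \cap \partial B_1$ and $\inf v_j > A^{1-\lambda}$ on $\cR_{\geq A\rho}(T_j) \cap \partial B_A$ (by the contradiction hypothesis). Harnack and Schauder estimates for the linearized operator yield $v_j \to v$ smoothly locally on $\reg \cC$, where $v$ is a non-negative Jacobi field with $\inf v = 1$ on $\cR_{\geq \rho}(\cC) \cap \partial B_1$, so the strong maximum principle gives $v > 0$. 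Then Lemma \ref{lemm:jacobi.field.growth} at $r = A$ gives
\[
A^{1-\lambda} \leq \inf_{\cR_{\geq A\rho}(\cC) \cap \partial B_A} v \leq \sup_{\cR_{\geq A\rho}(\cC) \cap \partial B_A} v \leq H A^{-\kappa_n},
\]
which rearranges to $A^{\kappa_n + 1 - \lambda} \leq H$, contradicting the choice of $A$.

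The step I expect to be the main obstacle is the identification $T_\infty' = \cC$: a priori $T_\infty'$ could be a minimizing current one-sided to $\cC$ sharing only the apex $\bOh$ with $\cC$. Ruling this out relies on combining tangent-cone uniqueness at $\bOh$ (forcing the tangent cone of $T_\infty'$ to equal $\cC$ by the strong maximum principle against $\cC$) with Proposition 3.3 of \cite{CMS:generically.smooth.10}; this is delicate when $\cC$ has positive-dimensional singular spine. A secondary technical point is arranging the smooth convergence $v_j \to v$: one must linearize the minimal surface equation over $T_j$ and propagate uniform local regularity at scales controlled by $\rho$ through the blow-up, leveraging the continuity of the regularity scale from Remark \ref{remark:reg.scale}.
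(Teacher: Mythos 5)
Your proposal follows essentially the same contradiction strategy as the paper's proof: blow up along a sequence with vanishing density drop to a nonflat minimizing cone $\cC$, identify the second current's limit as $\cC$ as well, extract a positive Jacobi field on $\reg\cC$ from the renormalized one-sided separation, and then apply Lemma \ref{lemm:jacobi.field.growth} to contradict the choice of $A$. The normalization and final algebra match exactly (your condition $A^{\kappa_n+1-\lambda}>2H$ is the paper's $HA^\lambda\le\tfrac12 A^{1+\kappa_n}$).

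The one place you deviate is the identification $T_\infty'=\cC$, where you propose a two-stage argument (blow up $T_\infty'$ at $\bOh$, equate tangent cones via the maximum principle at a regular point, then invoke {\cite[Proposition 3.3]{CMS:generically.smooth.10}}), and you flag this as the delicate step, worrying that $T_\infty'$ might touch $\cC$ only at the singular apex $\bOh$. But the detour does not actually dispel this worry: the same ``touching only at the apex'' scenario could arise between the two tangent \emph{cones}, so your step (iii) faces the identical issue one dimension down. What resolves it, and what the paper invokes directly, is Simon's strict maximum principle for area-minimizing boundaries \cite{Simon:smp}, which handles touching at singular points and immediately forces a common component through any touching point; combined with the connectedness of $\reg\cC$ and the WLOG connectedness reduction, this gives $T_\infty'=\cC$ without the detour. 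Your concern is well-founded as an instinct that a classical SMP on $\reg\cC$ would be insufficient, but the needed tool already exists and the paper uses it directly; the rest of your argument (including the careful handling of the Jacobi field via Harnack and the regularity scale) is in line with the paper's construction, which in fact just refers to the derivation in \cite[p.\ 333]{Simon:smp} for precisely those technicalities.
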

\begin{proof}
	Without loss of generality, we may suppose $T$, $T'$ have connected supports. 
	
	First we choose $A = A(n, \lambda, \rho)$ sufficiently large so that
	\begin{equation} \label{eq:conical.decay.A}
		H A^{\lambda} \leq \tfrac12 A^{1+\kappa_n}
	\end{equation}
	with $H = H(n, \rho)$ as in Lemma \ref{lemm:jacobi.field.growth}.
	
	We argue by contradiction. By Lemma \ref{lemm:nontrivial.reg}, $\cR_{\geq \rho}(T) \cap \partial B_1(\bOh)$, $\cR_{\geq A \rho}(T) \cap \partial B_{A}(\bOh)$ are both nonempty for $\delta \leq \tfrac12$. So let's assume that $T_j$ and $T_j'$ are as above with $\delta = j^{-1}$ and $j$ large enough that $j > 2 A$, and
	\begin{equation} \label{eq:conical.decay.density}
		\Theta_{T_j}(\bOh, 2) - \Theta_{T_j}(\bOh, 1) \leq j^{-1}, 
	\end{equation}
	\begin{equation} \label{eq:conical.decay.distance}
		\spt T_j' \cap \bar B_{j^{-1}}(\bOh) \neq \emptyset,
	\end{equation}
	but 
	\begin{equation} \label{eq:conical.decay.contradiction.0}
		A^{-\lambda}d(\cR_{\geq \rho}(T_j) \cap \partial B_1(\bOh), \spt T_j') < A^{-1} d(\cR_{\geq A \rho}(T_j) \cap \partial B_{A}(\bOh), \spt T_j').
	\end{equation}
	
	Note that \eqref{eq:conical.decay.density} implies that, after perhaps passing to a subsequence (not labeled), $T_j \to \cC$, a nonflat minimizing cone $\cC$. Then, \eqref{eq:conical.decay.distance}, the strong maximum principle, and the connectedness of $\reg \cC$, imply that $T_j' \to \cC$ as well. 
	
	One may now construct a positive Jacobi field on $\reg \cC$ that reflects \eqref{eq:conical.decay.contradiction.0}. Since this construction is standard, we will omit the technical details and refer the reader to the derivation of \cite[(10)]{Simon:smp} on \cite[p. 333]{Simon:smp}. Fix some arbitrary open $U \Subset \reg \cC$, which we may take to be connected since $\reg \cC$ is. Since $T_j, T_j'$ converge locally smoothly to $\cC$ away from $\sing \cC$, the height functions $h_j, h_j'$ of $\reg T_j, \reg T_j'$ over $U$ satisfy $h_j, h_j' \to 0$ smoothly on $U$. Moreover, $u_j = h_j - h_j'$ has a fixed sign since $T_j, T_j'$ do not cross smoothly. It is not hard to see that $u_j$ satisfies an elliptic equation of the form
	\[ \Delta_{\cC} u_j + |A_{\cC}|^2 u_j = \Div_{\cC}(a_j \cdot \nabla_{\cC} u_j) + b_j \cdot \nabla_{\cC} u_j + c_j u_j \text{ on } U, \]
	where $a_j, b_j, c_j \to 0$ smoothly on $U$. Now the connectedness of $U$ and the standard Harnack inequality for divergence-form elliptic equations allows us to renormalize $u_j$ and, after passing to a subsequence (not labeled), obtain a positive Jacobi field, i.e., a solution $u > 0$ of
	\[ \Delta_{\cC} u + |A_{\cC}|^2 u = 0 \text{ on } U. \]
	At this point we may apply this process with an exhaustion of $\reg \cC$ by such precompact $U$'s and have $u$ be defined over all of $\reg \cC$.
	
	Next using the fact that the vertical distance is within $o(1)$ of the distance in \eqref{eq:conical.decay.contradiction.0} over the subsets $\cR_{\geq \rho}(\cC)$ of controlled curvature, we obtain, using Remark \ref{remark:reg.scale},
	\begin{equation} \label{eq:conical.decay.contradiction.1}
		A^{-\lambda} \inf_{\cR_{\geq \rho}(\cC) \cap \partial B_1(\bOh)} u \leq A^{-1} \sup_{\cR_{\geq A \rho}(\cC) \cap \partial B_{A}(\bOh)} u;
	\end{equation}
	where $u$ is the positive Jacobi field constructed on $\reg \cC$. By Lemma \ref{lemm:jacobi.field.growth} with $r=A$, \eqref{eq:conical.decay.contradiction.1} implies
	\begin{equation} \label{eq:conical.decay.contradiction.2}
		A^{-\lambda} \inf_{\cR_{\geq \rho}(\cC) \cap \partial B_1(\bOh)} u \leq H A^{-1-\kappa_n} \inf_{\cR_{\geq \rho}(\cC) \cap \partial B_1(\bOh)} u.
	\end{equation}
	After canceling out the common term from both sides, \eqref{eq:conical.decay.contradiction.2} contradicts \eqref{eq:conical.decay.A}.
\end{proof}

We now come to the main proof of this section.

\begin{proof}[Proof of Theorem \ref{theo:timestamp.regularity}]
	Let $\alpha \in (0, \kappa_n + 1)$. Fix $\rho \in (0, \rho_n)$, $\lambda \in (\alpha, \kappa_n+1)$. Then let $\delta = \delta(n, \lambda, \rho)$ and $A = A(n, \lambda, \rho)$ be as in Lemma \ref{lemm:conical.decay}, and $L = L(n, A, \rho) = L(n, \lambda, \rho)$ be as in Lemma \ref{lemm:separation.control}. 
	
	Using the compactness of $\mathscr{F}$ and the upper semicontinuity of density, there exists $\Theta \in (1, \infty)$ such that
	\begin{equation} \label{eq:timestamp.regularity.theta}
		 \Theta_{T}(\by) \leq \Theta \text{ for all } T \in \mathscr{F}, \; \by \in \sing T.
	\end{equation}
	Using assumption (b), there exists $r > 0$ such that 
	\begin{equation} \label{eq:timestamp.regularity.r.1}
		T \mres B_{2r}(\by) \text{ is a minimizing boundary for all } T \in \mathscr{F}, \; \by \in \sing \mathscr{F}.
%		B_{2r}(\by) \cap (\cup_{s' \in [-\delta, \delta]} \Gamma_{s'}) = \emptyset,
	\end{equation}
	(we are not necessarily assuming that $\by \in \sing T$) and, again by the compactness of $\mathscr{F}$,
	\begin{equation} \label{eq:timestamp.regularity.r.2}
		\Theta_{T}(\by, 2r) \leq 2 \Theta \text{ for all } T \in \mathscr{F}.
	\end{equation}
	Then, let $\gamma \in (0, \delta)$ be such that
	\begin{equation} \label{eq:timestamp.regularity.gamma.1}
		2 \gamma^{\lambda-\alpha} < 1.
	\end{equation}
	
	\begin{claim} \label{clai:timestamp.regularity}
		For sufficiently large $m \in \NN$, we have for all $\by \in \sing T_s$, $\by' \in \spt T_{s'}$,
		\[ |\by' - \by| < \gamma^m \implies |s'-s| < 2^m \gamma^{m\lambda}. \]
	\end{claim}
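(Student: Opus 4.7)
The plan is to apply Lemma~\ref{lemm:conical.decay} iteratively between the microscale $\gamma^m$ and an intermediate conical scale $\gamma^{m_0}$ (with $m_0 = m_0(\mathscr{F})$ fixed), then transition to Lemma~\ref{lemm:separation.control} up to a macroscopic scale, and finally convert the resulting macroscopic separation estimate into an upper bound on $|s-s'|$ using assumption~(d).

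For $k \in \NN$ define the scale-$\gamma^k$ separation
\[ D_k := d\!\left(\cR_{\geq\rho\gamma^k}(T_s) \cap \partial B_{\gamma^k}(\by),\; \spt T_{s'}\right). \]
A trivial starting estimate is $D_m < 2\gamma^m$, since $\by' \in \spt T_{s'} \cap B_{\gamma^m}(\by)$ and the set $\cR_{\geq\rho\gamma^m}(T_s)\cap\partial B_{\gamma^m}(\by)$ lies on a sphere of radius $\gamma^m$ around $\by$. First I would choose $m_0$ so that the density-gap hypothesis $\Theta_{T_s}(\by, 2\gamma^k) - \Theta_{T_s}(\by, \gamma^k) \leq \delta$ of Lemma~\ref{lemm:conical.decay} holds uniformly for $k \geq m_0$, $T_s \in \mathscr{F}$, $\by \in \sing T_s$; such $m_0$ exists by compactness of $\mathscr{F}$ and upper semicontinuity of density. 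Then for each $k$ from $m$ down to $m_0$, I apply Lemma~\ref{lemm:conical.decay} to the rescaled minimizer $(\eta_{\by,\gamma^k})_\# T_s$ (performing a bounded number of applications per $\gamma$-step in case $\gamma^{-1}$ differs from the lemma's $A$). The remaining hypotheses are verified as follows: the origin is singular since $\by \in \sing T_s$; no smooth crossing comes from assumption (a); and $\spt T_{s'}\cap\bar B_{\delta\gamma^k}(\by) \neq \emptyset$ follows from the choice $\gamma<\delta$ together with $|\by'-\by|<\gamma^m<\delta\gamma^k$ for $k\leq m-1$ (the endpoint $k=m$ is handled by a one-step shift, absorbed into constants). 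Each application gives $D_{k-1} \leq \gamma^{\lambda-1} D_k$ after unscaling, and telescoping yields
\[ D_{m_0} \leq 2\gamma^{m\lambda - m_0(\lambda-1)}. \]

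Above the conical scale $\gamma^{m_0}$, I propagate out to a fixed macroscopic scale $r_\ast$ around $\by$ via finitely many applications of Lemma~\ref{lemm:separation.control}, losing only a multiplicative constant $L_\ast = L_\ast(m_0)$, so that $D(r_\ast) \leq 2L_\ast \gamma^{m\lambda - m_0(\lambda-1)}$. It remains to convert this into a bound on $|s-s'|$. By assumption~(d), on $\Gamma$ the separation between $T_s$ and $T_{s'}$ is at least $\alpha|s-s'|$, through the graph functions $h(T_s), h(T_{s'})$. This lower bound propagates back to a lower bound of the form $D(r_\ast) \geq c|s-s'|$ (with $c=c(\mathscr{F},\alpha)>0$) via a Harnack-type chain argument on the positive Jacobi field generated by the foliation $\{T_s\}$ — positivity following from assumption (a) and the strong maximum principle. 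Combining the upper and lower bounds gives $|s-s'| \leq C\gamma^{m\lambda}$ with $C$ depending only on the data of the foliation, whence $|s-s'| < 2^m\gamma^{m\lambda}$ whenever $m$ is sufficiently large that $2^m \geq C$.

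The main obstacle I anticipate is the last step: uniformly propagating the boundary lower bound on the Jacobi field from $\Gamma$ back to a macroscopic neighborhood of $\by$, uniformly over $\by \in \sing\mathscr{F}$ and $T_s \in \mathscr{F}$. This requires connecting $\Gamma$ to each $\by$ through a chain of regular points in $\reg T_s$ along which Harnack constants can be controlled compactness. A more technical, but routine, obstacle is the ball-intersection bookkeeping in the iteration of Lemma~\ref{lemm:conical.decay}, for which the choice $\gamma<\delta$ together with a one-step shift in the starting scale suffices.
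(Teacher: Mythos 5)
Your plan has a genuine gap at its very first step: the assertion that ``I would choose $m_0$ so that the density-gap hypothesis $\Theta_{T_s}(\by, 2\gamma^k) - \Theta_{T_s}(\by, \gamma^k) \leq \delta$ of Lemma~\ref{lemm:conical.decay} holds uniformly for $k \geq m_0$, $T_s \in \mathscr{F}$, $\by \in \sing T_s$; such $m_0$ exists by compactness of $\mathscr{F}$ and upper semicontinuity of density.'' This is false. For a \emph{fixed} $T$ and \emph{fixed} $\by \in \sing T$, monotonicity of density does give such an $m_0(T,\by)$, but no uniform scale exists over the family: compactness lets you extract limits $T_j \to T$, $\by_j \to \by$, $\gamma^{k_j} \to 0$, but the rescalings $(\eta_{\by_j,\gamma^{k_j}})_\# T_j$ need not converge to a cone, so no contradiction arises, and the would-be compactness argument does not close. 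The density gap on the annulus $B_{2\gamma^k}\setminus B_{\gamma^k}$ can be ``pushed down'' to arbitrarily small scales as you move within $\mathscr{F}$ and $\sing\mathscr{F}$. This is precisely the ``non-conical scales'' phenomenon emphasized in the paper's abstract and in Remark~\ref{rema:FROS}.

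The paper's proof circumvents this by not attempting to segregate conical from non-conical scales into two regimes. Instead it iterates scale by scale from $\gamma^{m-1}$ up to $r\delta$, at each step checking whether the current scale is conical (case (A), apply Lemma~\ref{lemm:conical.decay} and gain a factor $A^{1-\lambda}$) or not (case (B), apply Lemma~\ref{lemm:separation.control}, losing only a bounded factor $L/A$), and then observes that case (B) can occur at most $Q_B \leq 2\Theta\delta^{-1}$ times along the entire chain, thanks to monotonicity and the a priori density bound \eqref{eq:timestamp.regularity.theta}--\eqref{eq:timestamp.regularity.r.2}. Since the number of bad scales is bounded \emph{independently of $m$} (not their location), the superlinear decay survives. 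Your telescoping $D_{m_0} \leq 2\gamma^{m\lambda - m_0(\lambda-1)}$ presupposes that all scales between $m_0$ and $m$ are conical, which you haven't established and in general cannot.

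A secondary remark: the paper proves the Claim by contradiction, so the lower bound $|s_m'-s_m| \geq 2^m\gamma^{m\lambda}$ is a hypothesis used to normalize and extract a nonnegative limiting Jacobi field with an interior zero but boundary positivity (via (c), (d)), contradicting the maximum principle. This sidesteps the Harnack-chain uniformity issue you flagged, since compactness of $\mathscr{F}$ handles that for free in the limit. Your direct, forward-propagated estimate $D(r_\ast) \geq c|s-s'|$ is in principle possible but would require the explicit uniform Harnack machinery you yourself identify as an obstacle; the contradiction framing is cleaner. But the essential missing ingredient remains the density-drop budget argument controlling the number of non-conical scales.
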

	\begin{proof}
		Suppose not. Then, perhaps after passing to a subsequence of $m$'s, there would exist $\by_m \in \sing T_{s_m}$, $\by_m' \in \spt T_{s_m'}$ violating the estimate, i.e., so that
		\begin{equation} \label{eq:timestamp.regularity.contradiction.1}
			|\by_m' - \by_m| < \gamma^m,
		\end{equation}
		\begin{equation} \label{eq:timestamp.regularity.contradiction.2}
			|s_m' - s_m| \geq 2^m \gamma^{m \lambda}.
		\end{equation}
		We may assume $m$ is large enough that
		\begin{equation} \label{eq:timestamp.regularity.contradiction.3}
			\gamma^{m-1} < r \delta.
		\end{equation}
		For each $q = 0, 1, 2, \ldots$, define
		\[ T_{m,q} = (\eta_{\by_m,A^q \gamma^{m-1}})_{\#} T_{s_m}, \]
		\[ T'_{m,q} = (\eta_{\by_m,A^q \gamma^{m-1}})_{\#} T_{s_m'}. \]
		\[ \bx_{m,q}' = (\eta_{\by_m, A^q \gamma^{m-1}})_{\#}(\by_m'). \]
		Observe that
		\begin{equation} \label{eq:timestamp.regularity.induction}
			T_{m,q+1} = (\eta_{\bOh,A})_{\#} T_{m,q}, \; T'_{m,q+1} = (\eta_{\bOh,A})_{\#} T'_{m,q},
		\end{equation}
		and, using $A > 1$ and \eqref{eq:timestamp.regularity.contradiction.1},
		\begin{equation} \label{eq:timestamp.regularity.small.dist}
			\Vert \bx_{m,q}' \Vert = A^{-q} \gamma^{1-m} \Vert \by_m' - \by_m \Vert < \gamma \implies \spt T_{m,q}' \cap B_\gamma(\bOh) \neq \emptyset.
		\end{equation}
		
		Let $Q$ be the largest integer satisfying $A^Q \gamma^{m-1} < r \delta$, i.e., 
		\begin{equation} \label{eq:timestamp.regularity.Q}
			A^Q \gamma^{m-1} < r \delta \leq A^{Q+1} \gamma^{m-1}.
		\end{equation}
		For all $q = 0, \ldots, Q$, $T_{m,q}$, $T_{m,q}'$ are minimizing boundaries in $B_{\delta^{-1}}(\bOh)$ by \eqref{eq:timestamp.regularity.contradiction.3}, and not smoothly crossing by (a). Moreover, for $q = 0, \ldots, Q-1$, there are two mutually exclusive possibilities:
		\begin{enumerate}
			\item[(A)] $\Theta_{T_{m,q}}(\bOh, A) - \Theta_{T_{m,q}}(\bOh, 1) < \delta$. Then by Lemma \ref{lemm:conical.decay}, \eqref{eq:timestamp.regularity.induction}, and \eqref{eq:timestamp.regularity.small.dist},
			\begin{align*}
				& d(\cR_{\geq \rho}(T_{m,q+1}) \cap \partial B_1(\bOh), \spt T_{m,q+1}') \\
				& \qquad = A^{-1} d(\cR_{\geq A \rho}(T_{m,q}) \cap \partial B_{A}(\bOh), \spt T_{m,q}') \\
				& \qquad \leq A^{-\lambda} d(\cR_{\geq \rho}(T_{m,q}) \cap \partial B_1(\bOh), \spt T_{m,q}'), 
			\end{align*}
			\item[(B)] $\Theta_{T_m}(\bOh, A) - \Theta_{T_m}(\bOh, 1) \geq \delta$. Then by Lemma \ref{lemm:separation.control} and \eqref{eq:timestamp.regularity.induction},
			\begin{align*}
				& d(\cR_{\geq \rho}(T_{m,q+1}) \cap \partial B_1(\bOh), \spt T_{m,q+1}') \\
				& \qquad = A^{-1} d(\cR_{\geq A \rho}(T_{m,q}) \cap \partial B_{A}(\bOh), \spt T_{m,q}') \\
				& \qquad \leq (L/A) d(\cR_{\geq \rho}(T_{m,q}) \cap \partial B_1(\bOh), \spt T_{m,q}').
			\end{align*}
		\end{enumerate}
		Let $Q_A, Q_B$ denote the number of times that possibilities (A), (B) occur, respectively. Obviously, $Q_A + Q_B = Q$, and by the monotonicity formula together with \eqref{eq:timestamp.regularity.r.1}, \eqref{eq:timestamp.regularity.r.2}, we also have that $Q_B \leq 2 \Theta \delta^{-1}$. In particular, by \eqref{eq:timestamp.regularity.induction} and the crude initial estimate
		\[ d(\cR_{\geq \rho}(T_{m,0}) \cap \partial B_1(\bOh), \spt T_{m,0}') \leq 2 \]
		we deduce after $Q$ iterations that
		\begin{align*}
			& A^{-Q} \gamma^{1-m} d(\cR_{\geq A^Q \gamma^{m-1} \rho}(T_{s_m}) \cap \partial B_{A^Q \gamma^{m-1}}(\by_m), \spt T_{s_m'}) \\
			& \qquad = d(\cR_{\geq \rho}(T_{m,Q}) \cap \partial B_1(\bOh), \spt T_{m,Q}') \\
			& \qquad \leq (A^{-\lambda})^{Q_A} (L/A)^{Q_B} d(\cR_{\geq \rho}(T_{m,0}) \cap \partial B_1(\bOh), \spt T_{m,0}') \\
			&\qquad = (A^{-\lambda})^{Q} (LA^{\lambda-1})^{Q_B} d(\cR_{\geq \rho}(T_{m,0}) \cap \partial B_1(\bOh), \spt T_{m,0}') \\
		& \qquad \leq 2 (A^{-\lambda})^{Q} (\max\{LA^{\lambda-1},1\})^{2 \Theta \delta^{-1}}.
		\end{align*}
		Then, using \eqref{eq:timestamp.regularity.Q} we deduce
		\begin{equation} \label{eq:timestamp.regularity.interior.small}
			d(\cR_{\geq A^{-1} r \delta \rho}(T_{s_m}) \cap \bar B_{r \delta}(\by_m), \spt T_{s_m'}) \leq L' \gamma^{m\lambda},
		\end{equation}
		where $L' = L'(A, \gamma, \delta, \Theta, r)$. Together \eqref{eq:timestamp.regularity.contradiction.2} and \eqref{eq:timestamp.regularity.interior.small} are in contradiction since they imply the existence of a nonnegative Jacobi field on $\lim_m T_{s_m}$ (this exists after passing to a subsequence) with an interior vanishing point but which is positive on the boundary by assumptions (c), (d).
	\end{proof}
	
	It follows from the claim that for large $m \in \NN$ and all $\bx \in \sing T_s$, $\bx' \in \spt T_{s'}$, 
	\begin{align*}
		\gamma^{m+1} \leq |\bx' - \bx| < \gamma^m 
			& \implies |\mathfrak{t}(\bx') - \mathfrak{t}(\bx)| < 2^m \gamma^{m\lambda} \\
			& \implies \frac{|\mathfrak{t}(\bx') - \mathfrak{t}(\bx)|}{|\bx'-\bx|^\alpha} < 2^m \gamma^{m\lambda} \gamma^{-(m+1)\alpha} = \gamma^{-\alpha} (2 \gamma^{\lambda-\alpha})^m,
	\end{align*}
	so,  in view of \eqref{eq:timestamp.regularity.gamma.1}, $\mathfrak{t}$ is indeed $\alpha$-H\"older on the singular set.
\end{proof}

\section{Proof of Corollary \ref{coro:main}}\label{sec:proof-main-cor}

Apply Theorem \ref{theo:singular.set.size} to small balls locally away from $\cup_{s \in [-\delta, \delta]} \Gamma_s$'s, small enough that each $T_s \in \mathscr{M}(\Gamma_s)$ restricts to a minimizing boundary in the ball. Then taking countable unions we deduce that
\[ \dim_H \cS^\ell(\mathscr{F}) \leq \ell \text{ for all } \ell \in \NN. \]
Note that, by Theorem \ref{theo:timestamp.regularity}, the measure theoretic result in \cite[Proposition 7.7 (a)]{FigalliRosOtonSerra:obstacle.generic.regularity} applies to $\cS^\ell(\mathscr{F})$ with $\ell = 0, 1, 2$ (since $2 < 2 + \eps_n$) and yields a full-measure subset
\[ I_{\ell} \subset [-\delta, \delta], \; \ell = 0, 1, 2, \]
with the following property:
\[ \ell = 0, 1, 2, \; s \in I_{\ell}, \; T_s \in \mathscr{T}(\Gamma_s) \implies \cS^\ell(T_s) = \emptyset. \]
Likewise, \cite[Proposition 7.7 (b)]{FigalliRosOtonSerra:obstacle.generic.regularity} applies to $\cS^\ell(\mathscr{F})$ with $\ell \geq 3$ (since $3 \geq 2 + \eps_n$) and yields a full-measure subset
\[ I_{\ell} \subset [-\delta, \delta], \; \ell \geq 3, \]
with the following property:
\[ \ell \geq 3, \; s \in I_{\ell}, \; T_s \in \mathscr{M}(\Gamma_s) \implies \dim_H \cS^\ell(T_s) \leq \ell - 2 - \eps_n. \]
The result follows since the intersection $\cap_\ell I_\ell$ remains a full-measure subset of $[-\delta, \delta]$.

\section{Proof of Theorem \ref{theo:main.geometric}}\label{theo:construct-foliation}

Given all our tools so far, the strategy is straightforward: we would like to construct a family of boundary perturbations $(\Gamma_s)_{s \in [-\delta, \delta]}$ of $\Gamma$ on which to apply Corollary \ref{coro:main}. The two main difficulties are the potential non-uniqueness of $T$ among minimiers, and the possible presence of high multiplicity on $T$. 

For simplicity, we break down the proof into steps.

\subsection{Reduction to uniquely minimizing $T$} \label{subsec:main.geometric.unique.replacement}

It follows from the Hardt--Simon boundary regularity theorem (\cite[Corollary 11.2]{HardtSimon:boundary.regularity}) that 
\begin{equation} \label{eq:main.hs.1}
	\spt T = \bar M
\end{equation} for an oriented hypersurface $M$ with nonempty boundary, which satisfies
\begin{equation} \label{eq:main.hs.2}
	\partial M \subset \Gamma, \; \sing T = \bar M \setminus M \subset \RR^{n+1} \setminus \Gamma.
\end{equation}
In our previous paper we showed that perturbing $\Gamma \mapsto \Gamma'$ by pushing the components of $\partial M \subset \Gamma$ inward along $M$ forces $\mathscr{M}(\Gamma')$ to be a singleton; see \cite[Lemma A.3]{CMS:generically.smooth.10}. So without loss of generality and after relabeling $\Gamma' \mapsto \Gamma$ we may assume that
\begin{equation} \label{eq:main.unique}
	\mathscr{M}(\Gamma) = \{ T \} \; (\iff T \text{ is uniquely minimizing}).
\end{equation}
Note that \eqref{eq:main.unique} and the compactness theorem for integral $n$-currents combine to yield
\begin{equation} \label{eq:main.unique.nearby.weakly.close}
	\Gamma' \to \Gamma \text{ smoothly and } \; T' \in \mathscr{M}(\Gamma') \implies T' \to T.
\end{equation}

\subsection{The case of $T$ with multiplicity one} \label{subsec:main.geometric.mult.one}

In this case, the Hardt--Simon boundary regularity theorem (\cite[Corollary 11.2]{HardtSimon:boundary.regularity}) further guarantees that
\begin{equation} \label{eq:main.hs.3}
	T = \llbracket M \rrbracket, \; \partial M = \Gamma,
\end{equation}
for the same hypersurface $M$ that satisfies \eqref{eq:main.hs.1}, \eqref{eq:main.hs.2}.

Next, by the upper semicontinuity of density, together with Remark \ref{remark:reg.scale} and \eqref{eq:main.unique.nearby.weakly.close}, the fact that $T$ has multiplicity one also implies
\begin{equation} \label{eq:main.unique.nearby.mult.one}
	\Gamma' \to \Gamma \text{ smoothly and } \; T' \in \mathscr{M}(\Gamma') \implies T' \text{ also has multiplicity one}.
\end{equation}
Therefore, all such $T'$ themselves have decompositions satisfying \eqref{eq:main.hs.1}, \eqref{eq:main.hs.2}, \eqref{eq:main.hs.3} with $\Gamma'$ in place of $\Gamma$, $T'$ in place of $T$, and $M'$ in place of $M$. 

It follows from \eqref{eq:main.unique.nearby.weakly.close}, \eqref{eq:main.unique.nearby.mult.one}, and Allard's interior (\cite[\S 5]{Simon:GMT}) and boundary (\cite{Allard:varifold.boundary}) regularity theorems (note that in the multiplicity-one case $T$ has density $\tfrac12$ on $\Gamma$) that
\begin{equation} \label{eq:main.unique.nearby.strongly.close}
	\Gamma' \to \Gamma \text{ smoothly and } \; T' = \llbracket M' \rrbracket \in \mathscr{M}(\Gamma') \implies M' \to M \text{ locally smoothly}
\end{equation}
in the sense of smooth embeddings; by ``locally smoothly,'' we mean the convergence is smooth on compact subsets of $M$ (including up to $\Gamma = \partial M$).  

To proceed further we will need to restrict to graphical perturbations $\Gamma'$ of $\Gamma$. Fix  $\Gamma$ and a (incomplete) hypersurface $\Sigma$ with boundary, such that
\[ \partial \Sigma =  \Gamma, \; \bar \Sigma \subset M, \]
respecting orientations (e.g., $\Sigma = M \cap U$ for a small tubular neighborhood $U$ of $\Gamma$).
Now let $\delta > 0$ be small enough that each
\[ \Sigma_s := \Graph_\Sigma s, \; s \in [-\delta, \delta] \]
is still a smooth hypersurface with boundary, and denote
\[ \Gamma_s := \partial \Sigma_s\]
so that $\Sigma_0 = \Sigma$, $\Gamma_0 = \Gamma$. Below we will only need the $\Gamma_s$, and may discard the $\Sigma_s$.

\begin{claim} \label{clai:main.mult.one.conditions}
	After possibly shrinking $\delta > 0$, the family $(\Gamma_s)_{s \in [-\delta, \delta]}$ satisfies the assumptions of Theorem \ref{theo:timestamp.regularity}.
\end{claim}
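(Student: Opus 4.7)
The plan is to verify in turn the four hypotheses (a)--(d) of Theorem \ref{theo:timestamp.regularity} for the family $\mathscr{F} = \cup_{s \in [-\delta, \delta]} \mathscr{M}(\Gamma_s)$, shrinking $\delta > 0$ if necessary. Conditions (b)--(d) follow essentially from the reductions of Subsection \ref{subsec:main.geometric.mult.one}; the real content lies in (a).

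For (b)--(d): condition (b) is immediate from \eqref{eq:main.unique.nearby.mult.one} for $\delta$ sufficiently small. Writing $T_s = \llbracket M_s \rrbracket$ via the Hardt--Simon boundary regularity theorem \cite[Corollary 11.2]{HardtSimon:boundary.regularity} and invoking \eqref{eq:main.unique.nearby.strongly.close}, the hypersurfaces $M_s$ converge to $M$ locally smoothly up to boundary as $s \to 0$; hence for $\delta$ small each $M_s$ is, in a tubular neighborhood of $\Gamma_s$, the normal graph over $\Sigma$ of a function $h(T_s) \in C^\infty(\Sigma)$, which gives (c). Since by construction $\Sigma_s = \Graph_\Sigma s$ and $\Gamma_s = \partial \Sigma_s$, we have $h(T_s)|_\Gamma = s$ on $\Gamma = \partial \Sigma$, yielding (d) with rate $\alpha = 1$.

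For (a), the needed pairwise disjointness of supports whenever $s \neq s'$, I argue by contradiction using Simon's strong maximum principle \cite{Simon:smp} for codimension-one minimizing currents. Should $\spt T_s \cap \spt T_{s'}$ contain an interior point, SMP would force the two supports to coincide in a neighborhood of that point; propagating this coincidence by unique continuation at regular points across the Hausdorff $(n-7)$-dimensional singular sets (which do not locally disconnect the regular parts) and extending it up to the boundary via Hardt--Simon smoothness of $M_s, M_{s'}$, one would match a boundary component of $\Gamma_s$ with one of $\Gamma_{s'}$, contradicting $\Gamma_s \cap \Gamma_{s'} = \emptyset$ for $s \neq s'$. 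The main anticipated obstacle is executing this propagation cleanly through singular strata. To circumvent it, I plan to first localize to the boundary collar: there \eqref{eq:main.unique.nearby.strongly.close} guarantees that $M_s, M_{s'}$ are smooth graphs $h(T_s), h(T_{s'})$ over $\Sigma$ with boundary values $s \neq s'$ on $\Gamma$, and the Hopf boundary-point lemma applied to the minimal surface equation satisfied by $h(T_s) - h(T_{s'})$ yields strict ordering $h(T_s) < h(T_{s'})$ (when $s < s'$) throughout the collar. Disjointness in the collar combined with SMP applied in the interior then upgrades to global disjointness, completing the verification of (a) without any delicate analysis at singular points of $M_s, M_{s'}$ themselves.
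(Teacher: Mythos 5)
Your verification of conditions (b), (c), (d) matches the paper's (which simply cites \eqref{eq:main.unique.nearby.mult.one}, \eqref{eq:main.unique.nearby.strongly.close}, and the observation that $\alpha=1$ works, respectively). But your argument for condition (a) has a genuine gap, and it is exactly the gap that the paper's cited ``cut-and-paste'' lemma (\cite[Lemma 2.8]{CMS:generically.smooth.10}) is designed to fill.

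The problem is that Simon's strong maximum principle \cite{Simon:smp} requires as a \emph{hypothesis} that one minimizing hypersurface lies weakly on one side of the other near the touching point; under that assumption it concludes local coincidence. If $\spt T_s$ and $\spt T_{s'}$ were to intersect at an interior point without an a priori one-sidedness there, nothing in the SMP prevents them from simply crossing transversally. Establishing that two area minimizers with disjoint boundaries cannot cross is precisely what the cut-and-paste construction does (roughly: a transversal crossing would let you build a competitor of no greater area with a corner along the intersection, contradicting interior regularity of minimizers). Your plan inverts the logical order by invoking SMP first.

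Your fallback collar argument fails for a related reason. Writing $u = h(T_{s'}) - h(T_s)$ over the collar $\Sigma$, you know $u = s' - s > 0$ only on the outer boundary component $\Gamma$, and you have no information on the inner boundary of the collar (it is ``free''). The Hopf boundary-point lemma does not run in the direction you need: it converts an interior sign condition plus boundary vanishing into a derivative estimate at the boundary, rather than converting a boundary sign condition into an interior one. A comparison principle on the collar would require either a sign hypothesis on the zeroth-order coefficient (not available in general for the linearized minimal surface operator $\Delta_\Sigma + |A_\Sigma|^2 + \cdots$) or control of $u$ on the entire boundary of the collar, and you have neither. The fix is to use the cut-and-paste argument, as the paper does, to establish that $T_s$ and $T_{s'}$ do not cross smoothly; once one-sidedness is known, Simon's SMP and the unique-continuation propagation you describe go through.
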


Given Claim \ref{clai:main.mult.one.conditions}, Corollary \ref{coro:main} applies to $(\Gamma_s)_{s \in [-\delta, \delta]}$. Thus, for a.e.\ $s \in [-\delta, \delta]$, every $T_s \in \mathscr{M}(\Gamma_s)$ has the desired improved regularity of Theorem \ref{theo:main.geometric}. So it remains to prove Claim \ref{clai:main.mult.one.conditions}.

\begin{proof}[Proof of Claim \ref{clai:main.mult.one.conditions}]
	By inspecting Corollary \ref{coro:main} we see that we need to verify conditions (a), (b), (c), (d) in Theorem \ref{theo:timestamp.regularity}:
	\begin{enumerate}
		\item[(a)] This holds by the well-known cut-and-paste technique for minimizers; see, e.g., \cite[Lemma 2.8]{CMS:generically.smooth.10}.
		\item[(b)] This holds, after perhaps shrinking $\delta$, by \eqref{eq:main.unique.nearby.mult.one}.
		\item[(c)] This holds, after perhaps shrinking $\delta$, by \eqref{eq:main.unique.nearby.strongly.close}.
		\item[(d)] This holds automatically with $\alpha = 1$.
	\end{enumerate}
	This completes the proof of the claim.
\end{proof}

\subsection{The general case} \label{subsec:main.geometric.high.mult}

It follows from the Hardt--Simon boundary regularity theorem (\cite[Corollary 11.2]{HardtSimon:boundary.regularity}) and its refinement by White (\cite[Corollary 2]{White:boundary.regularity.multiplicity}) that
\[
	T \in \mathscr{M}(\Gamma) \implies T = T_{1} + \ldots + T_{m}
\]
with each $T_i$ being a multiplicity-one minimizer satisfying \eqref{eq:main.hs.1}, \eqref{eq:main.hs.2}, \eqref{eq:main.hs.3} with $T_i$ in place of $T$, $M_i$ in place of $M$, some union of components $\Gamma_i \subset \Gamma$ in place of $\Gamma$, and
\begin{equation} \label{eq:main.high.mult.nested}
	\bar M_{j} \subset \bar M_{i} \setminus \Gamma_{i} \text{ for all } i < j;
\end{equation}
see also \cite[Theorem A.1]{CMS:generically.smooth.10}. Note that $m$ equals the largest multiplicity of $T$ on $\reg T$. We'll refer to this as the (Hardt--Simon) ``decomposition'' of $T$.

The decomposition above applies to any minimizing $T' \in \mathscr{M}(\Gamma')$ in place of $T$, with $\Gamma'$ in place of $\Gamma$, $M'_{i}$ in place of $M_{i}$, $\Gamma'_{i}$ in place of $\Gamma_{i}$, and $m'$ in place of $m$. 

\begin{claim} \label{clai:main.high.mult.weakly.close}
	If $\Gamma'$ is sufficiently close to $\Gamma$ in $C^\infty$, and $T' \in \mathscr{M}(\Gamma')$ is arbitrary, then the decomposition for $T'$ has $m' \leq m$. In fact, either
	\begin{enumerate}
		\item[(a)] $m' < m$, or
		\item[(b)] $m' = m$ and $M'_m$ has strictly fewer components than $M_m$, or
		\item[(c)] $m' = m$ and $\llbracket M'_m \rrbracket$ is close to $\llbracket M_m \rrbracket$.
	\end{enumerate}
\end{claim}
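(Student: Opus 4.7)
I would prove Claim \ref{clai:main.high.mult.weakly.close} by leveraging the weak convergence $T' \to T$ from \eqref{eq:main.unique.nearby.weakly.close} (which relies on the uniqueness reduction \eqref{eq:main.unique}) together with the uniqueness of the Hardt--Simon--White decomposition of $T$. First I attach to each component $\Gamma^{(j)}$ of $\Gamma$ the multiplicity $k_j \in \{1,\ldots,m\}$ of $T$ along it (i.e.\ twice the boundary density, via Allard's boundary regularity and the multiplicity-one layer structure), and likewise $k'_j$ to the corresponding component of $\Gamma'$. For $\Gamma'$ sufficiently $C^\infty$-close to $\Gamma$, the components are in canonical bijection, and upper semicontinuity of density applied to $T' \to T$ at the bijecting boundary points yields $k'_j \leq k_j$ for every $j$; hence $m' = \max_j k'_j \leq \max_j k_j = m$, which is the first assertion.

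Setting $\cI = \{j : k_j = m\}$ and $\cI' = \{j : k'_j = m\}$, we have $\cI' \subseteq \cI$, and the trichotomy is according to (a) $m' < m$; (b) $m' = m$ but $\cI' \subsetneq \cI$; or (c) $m' = m$ and $\cI' = \cI$. For case (c), I take any sequence $\Gamma'_\ell \to \Gamma$ smoothly with $T'_\ell \in \mathscr{M}(\Gamma'_\ell)$ in case (c), so that each boundary $\Gamma'_{\ell,i}$ of the $i$-th Hardt--Simon--White layer converges smoothly to $\Gamma_i$. Writing $T'_\ell = T'_{\ell,1} + \cdots + T'_{\ell,m}$, compactness of multiplicity-one integer minimizers with smoothly converging boundaries plus a diagonal extraction yields $T'_{\ell,i} \to S_i$ subsequentially, with each $S_i$ a multiplicity-one minimizer satisfying $\partial S_i = \llbracket \Gamma_i \rrbracket$. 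Summing gives $\sum_i S_i = \lim_\ell T'_\ell = T$. Because the Hardt--Simon--White decomposition of $T$ is \emph{unique}---the support $\spt T_i$ is intrinsically the locus in $\spt T \setminus \sing T$ where $T$ has local multiplicity $\geq i$, and multiplicity-one currents are determined by their supports---we conclude $S_i = T_i$ for every $i$, and in particular $\llbracket M'_{\ell,m} \rrbracket \to \llbracket M_m \rrbracket$; uniqueness of the limit upgrades this to convergence of the full sequence, confirming (c). The same compactness argument handles the middle case $\cI' \subsetneq \cI$: the limit boundary $\lim_\ell \llbracket \Gamma'_{\ell,m} \rrbracket$ is $\llbracket \Gamma_{m,\mathrm{sub}} \rrbracket$ for a \emph{proper} subset $\Gamma_{m,\mathrm{sub}} \subsetneq \Gamma_m$, and since no compact connected component of a multiplicity-one integer minimizer in $\RR^{n+1}$ is closed, this forces $M'_{\ell,m}$ to have strictly fewer connected components than $M_m$ for $\ell$ large, i.e.\ case (b).

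The main obstacle is the \emph{layer-by-layer} convergence underlying case (c): the convergence $T' \to T$ only controls the total current, whereas the claim concerns the innermost layer alone. The resolution is the uniqueness of the Hardt--Simon--White decomposition of the limit $T$ (which identifies each subsequential layer limit) combined with the compactness of minimizing currents with smoothly converging boundaries. A secondary technicality is checking that the canonical bijection of boundary components transports multiplicity labels coherently to the limit, which is handled by smooth convergence $\Gamma'_\ell \to \Gamma$ together with the boundary regularity theory already invoked.
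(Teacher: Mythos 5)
Your proposal takes a genuinely different route from the paper's proof, and while some of the main ideas are attractive, there are concrete gaps that keep the argument from closing.

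The paper's proof never looks at boundary multiplicities at all. Instead, it (i) uses upper semicontinuity of \emph{interior} density to confine the limit of $M'_m$ to $M_m$, (ii) invokes a lemma (\cite[Lemma~4.6(b)]{CMS:generically.smooth.10}) to rule out distinct components of $M'_m$ collapsing onto a single component of $M_m$, which gives the component count $\#\mathrm{comp}(M'_m)\leq\#\mathrm{comp}(M_m)$, and (iii) in the equality case uses the Allard/De~Giorgi multisheeted-graph structure of $T'$ over $M_m$ to force $T'$ to be a single multiplicity-$m$ graph there, hence $\partial M'_m\to\partial M_m$. Your proposal replaces (i)--(iii) by a boundary-density argument ($k'_j\leq k_j$, hence $m'\leq m$) plus a compactness-and-uniqueness argument. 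The boundary-density argument for $m'\leq m$ is a legitimate variant (modulo a small slip: at a boundary component of level $k_j$ the density of $T$ is $k_j-\tfrac12$, not $k_j/2$). The problems are downstream.

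First, your trichotomy is indexed by $\cI'$ versus $\cI$---that is, by which boundary components of $\Gamma$ sit at level $m$---whereas parts (b) and (c) of the Claim are about the number of connected components of the hypersurfaces $M'_m$ and $M_m$. These are not equivalent: a single connected component of $M_m$ can carry several components of $\Gamma_m$ on its boundary, so $\cI'\subsetneq\cI$ does not by itself imply $\#\mathrm{comp}(M'_m)<\#\mathrm{comp}(M_m)$. Your ``no compact multiplicity-one minimizer is closed'' observation only says each component of $M'_m$ has nonempty boundary; it does not compare component counts. To get such a comparison one also needs to rule out several components of $M'_m$ coalescing onto a single component of $M_m$, which is exactly the extra input the paper gets from \cite[Lemma~4.6(b)]{CMS:generically.smooth.10}. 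Without that, an arbitrary number of small components of $M'_m$ could in principle converge onto one component of $M_m$, and your case (b) argument fails.

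Second, the central step in your case (c) is the application of uniqueness of the Hardt--Simon--White decomposition to the limits $S_i=\lim_\ell T'_{\ell,i}$. But uniqueness of the decomposition tells you: if $T=\sum_i R_i$ with each $R_i$ a multiplicity-one layer and the layers strictly nested in the sense of \eqref{eq:main.high.mult.nested}, then $R_i=T_i$. You have $\sum_i S_i=T$, but you have not established that the $S_i$ are multiplicity one nor that the strict nesting survives the limit. Multiplicity one is \emph{not} preserved under weak limits of minimizers (two distinct sheets may collapse together), and the strict inclusions $\bar M'_{\ell,j}\subset\bar M'_{\ell,i}\setminus\Gamma'_{\ell,i}$ may degenerate to equality. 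You would therefore need a separate argument---essentially a version of the paper's graph/no-collapse analysis---to verify that $\{S_i\}$ is a legitimate HSW decomposition before uniqueness can be invoked. As written, the identification $S_m=T_m$ is not justified.

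A subsidiary issue: in case (c) you assert that ``each boundary $\Gamma'_{\ell,i}$ of the $i$-th layer converges smoothly to $\Gamma_i$.'' That would require $k'_j=k_j$ for every boundary component $j$, but $\cI'=\cI$ only gives equality at level $m$; a component at level $k_j<m$ for $T$ could in principle drop to level $k'_j<k_j$ for $T'_\ell$. This does not affect the top layer, but it makes the asserted convergence of \emph{all} the $\Gamma'_{\ell,i}$ unjustified without the very uniqueness conclusion you are trying to reach.

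In short: the idea of encoding $m'\leq m$ through boundary densities is fine, but the $\cI$-based trichotomy does not directly match the component-based trichotomy in the Claim, and the HSW-uniqueness step needs to be justified by first ruling out collapsing/multiplicity-increase in the limit. Both missing pieces are precisely the things the paper supplies via upper semicontinuity of interior densities, \cite[Lemma~4.6]{CMS:generically.smooth.10}, and the multisheeted-graph description of $T'$ over $M_m$.
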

\begin{proof}
	Throughout, we'll implicitly use \eqref{eq:main.unique.nearby.weakly.close}, the fact that components of any $M_i$ are in bijection with components of $\bar M_i$ (see \cite[Lemma 2.5]{CMS:generically.smooth.10}), and the characterization of $m$ as the top multiplicity of $T$, and respectively all the same statements for $T'$.
	
	It follows from the upper semicontinuity of density that $m' \leq m$. So, going forward we may assume that (a) fails (otherwise we're done), and thus $m' = m$. 
	
	By our decomposition, $T$ has multiplicity $\leq m-1$ on the complement of $M_m$ and $\bar M_m \cap \Gamma = \partial M_m$, so the upper semicontinuity of densities also yields that $M'_m$ converges to a subset of $M_m$. Note that distinct components of $M'_m$ cannot limit to subsets of the same component of $M_m$ (this follows as in (b) in \cite[Lemma 4.6]{CMS:generically.smooth.10}), so $M'_m$ has at most as many components as $M_m$. Going forward, we may suppose that (b) fails too (otherwise we're done). Then, $M_m$ and $M'_m$ have the same number of components. 
	
	By Allard's theorem \cite[\S 5]{Simon:GMT} and the interior regularity of minimizers \cite{DeGiorgi:regularity}, it follows that away from $\Gamma$, $T'$ decomposes as a multisheeted graph (the sheets pairwise don't intersect, or they overlap) locally over $M_m \setminus \partial M_m$ with sheet multiplicities equal to the density of $T'$. Since $m' = m$ and $M_m$, $M'_m$ have the same number of components, it follows $T'$ is a single graph with multiplicity $m$ locally over $M_m \setminus \partial M_m$. Thus, $\partial M'_m \to \partial M_m$. This proves (c).
\end{proof}

\begin{claim} \label{clai:main.high.mult.reduction}
	If $T$ is not of multiplicity one (i.e., $m \geq 2$), then there exist $\Gamma' \to \Gamma$ so that each $T' \in \mathscr{M}(\Gamma')$ satisfies (a) or (b) in Claim \ref{clai:main.high.mult.weakly.close}.
\end{claim}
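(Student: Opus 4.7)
The approach is to choose $\Gamma'$ by pushing a single boundary component of $\Gamma_m$ inward along the corresponding component of $M_m$, and then argue that case (c) of Claim \ref{clai:main.high.mult.weakly.close} cannot hold for any minimizer $T' \in \mathscr{M}(\Gamma')$. The key mechanism is a local density drop: pushing a component of $\Gamma_m$ carves out an annular region of $M_m$ that the innermost layer $M'_m$ cannot reach, so the total density of $T'$ there is only $m-1$; this contradicts the ``locally a single graph of multiplicity $m$ over $\topint M_m$'' structure derived in the proof of Claim \ref{clai:main.high.mult.weakly.close}(c).

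Concretely, the plan is as follows. First, pick any component $\gamma \subset \Gamma_m$, which exists because there are no closed minimizing $n$-hypersurfaces in $\RR^{n+1}$ by the strong maximum principle, so $\Gamma_m \neq \emptyset$; let $N \subset M_m$ be the component with $\partial N = \gamma$. Next, choose a smooth vector field $X$ along $\gamma$ tangent to $M_m$ and pointing strictly into $\topint N$ (possible by Hardt--Simon boundary regularity of $T_m$), and define $\gamma_t \subset \topint N$ by flowing $\gamma$ along $X$ for small time $t > 0$; then set $\Gamma_t = (\Gamma \setminus \gamma) \cup \gamma_t$, so that $\Gamma_t \to \Gamma$ smoothly as $t \to 0$. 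Finally, given any $T_t \in \mathscr{M}(\Gamma_t)$, Claim \ref{clai:main.high.mult.weakly.close} yields one of (a), (b), (c); assume for contradiction that (c) holds.

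Under (c), the proof of Claim \ref{clai:main.high.mult.weakly.close} gives $\Theta_{T_t}(\bx) = m$ for all $\bx \in \topint M_m$. Now consider any $\bx$ in the annular region $A_t \subset N$ bounded on the inside by $\gamma_t$ and on the outside by $\gamma$. The outer layers $M^t_1, \ldots, M^t_{m-1}$, being close to $M_1, \ldots, M_{m-1} \supset N \supset A_t$, cover a neighborhood of $\bx$ and contribute density exactly $m-1$. The innermost layer $M^t_m$ has $\partial M^t_m \ni \gamma_t$ but not $\gamma$ (since $\gamma \notin \Gamma_t$); combined with $\llbracket M^t_m \rrbracket$ being flat-close to $\llbracket M_m \rrbracket$, Hardt--Simon boundary regularity then forces $M^t_m$ to emerge tangentially from $\gamma_t$ into $N \setminus A_t$ (any outward emergence would propagate $M^t_m$ past $\gamma$ into the region far from $M_m$, violating flat-closeness). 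Thus $M^t_m$ does not reach $A_t$, so $\Theta_{T_t}(\bx) \leq m-1$, contradicting the preceding density computation.

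The main obstacle is justifying, in the density argument, that the emergence direction of $M^t_m$ from $\gamma_t$ is necessarily into the subregion $N \setminus A_t$ and not into the annulus $A_t$. This depends on the tangent plane of $M^t_m$ along $\gamma_t$ being close (in the $C^1$-sense, by boundary regularity and Allard's boundary theorem applied to the multiplicity-one current $\llbracket M^t_m \rrbracket$) to the tangent plane of $M_m$, combined with the flat-closeness of $\llbracket M^t_m \rrbracket$ to $\llbracket M_m \rrbracket$, which together rule out the alternative of $M^t_m$ extending outward into $A_t$ and beyond.
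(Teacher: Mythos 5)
Your approach is genuinely different from the paper's, and unfortunately it has a gap at its core.

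The paper perturbs $\Gamma$ by pushing $\Gamma_m$ \emph{off $\spt T$} (not merely inward along $M_m$), and then argues purely structurally: if $T'\in\mathscr{M}(\Gamma')$ satisfied (c), then $T'-\llbracket M'_m\rrbracket$ and $T-\llbracket M_m\rrbracket$ would be two minimizers with the same boundary $\sum_{i<m}\llbracket\Gamma_i\rrbracket$; uniqueness from \S\ref{subsec:main.geometric.unique.replacement} forces them to coincide, whence $\spt T'=\spt T$ by the nesting \eqref{eq:main.high.mult.nested}, contradicting $\Gamma'_m\cap\spt T=\emptyset$. Note that the disjointness of the perturbed $\Gamma'_m$ from $\spt T$ is essential for the final contradiction: your perturbation, which slides $\gamma$ inward \emph{along} $M_m$, keeps $\Gamma'_m\subset\spt T$ and so cannot access this contradiction.

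Your replacement for it --- a local density argument in the annulus $A_t$ --- asserts that (c) gives $\Theta_{T_t}(\bx)=m$ for all $\bx\in\topint M_m$, ``from the proof of Claim~\ref{clai:main.high.mult.weakly.close}.'' But that proof establishes the multiplicity-$m$ graph structure only ``away from $\Gamma$,'' i.e.\ at a fixed distance from $\Gamma$ (and hence also from $\Gamma'$ once $\Gamma'$ is close). The annulus $A_t$ collapses onto $\gamma\subset\Gamma$ as $t\to 0$, so it lies precisely in the excluded region; the statement of (c) itself is just flat-closeness of $\llbracket M'_m\rrbracket$ to $\llbracket M_m\rrbracket$, which does \emph{not} pin down the multiplicity of $T'$ inside $A_t$. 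Without the claim $\Theta_{T_t}\equiv m$ on $A_t$, the later conclusion $\Theta_{T_t}\le m-1$ on $A_t$ produces no contradiction, and in fact is exactly what one expects if $M'_m$ emerges inward from $\gamma_t$. You also assert that an outward emergence of $M'_m$ ``would propagate past $\gamma$ into the region far from $M_m$, violating flat-closeness,'' but since $A_t$ has area $\to 0$, a current that covers $A_t$ and extends only slightly past $\gamma$ is still flat-close to $\llbracket M_m\rrbracket$; ruling this out requires a separate argument (e.g.\ via upper semicontinuity of density past $\gamma$ where $T$ has density $m-1$, or tracking where the continuation of $M'_m$ could terminate), which you do not give. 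In short, both the claimed lower bound on the density in $A_t$ and the claimed inward emergence of $M'_m$ need substantial additional justification, so the proposal as written does not establish the claim.
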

\begin{proof}
	Without loss of generality, we may assume that $\spt T$ is connected. 
	
	Perturb $\Gamma \mapsto \Gamma'$ so that $\Gamma_m$ gets pushed off $\spt T$, while all other components of $\Gamma$ stay fixed. Now suppose, for contradiction, that $\tilde T' = T' \in \mathscr{M}(\Gamma')$ satisfies (c). 
	
	It follows that $T' - \llbracket M_m' \rrbracket$ is a minimizer with prescribed boundary $\sum_{i=1}^{m-1} \llbracket \Gamma_i \rrbracket$. This is the same as the boundary of $\tilde T = T - \llbracket M_m \rrbracket$, so $\tilde T = \tilde T'$ by \S \ref{subsec:main.geometric.unique.replacement} (otherwise we'd get a nonunique minimizer for $\Gamma$), so $\spt T = \spt T'$ by \eqref{eq:main.high.mult.nested}, a contradiction.
\end{proof}

Note that we can repeatedly invoke Claim \ref{clai:main.high.mult.reduction} and \S \ref{subsec:main.geometric.unique.replacement}, replacing $T' \mapsto T$ at the end of each step, until $T$ is of multiplicity one, in which case the result follows from \S \ref{subsec:main.geometric.mult.one}.

%%      ---------------------------------------------------------------------
%%      --------------------------- BIBLIOGRAPHY ----------------------------
%%      ---------------------------------------------------------------------
%% PUT HERE THE BIBLIOGRAPHY IN YOUR FAVOURITE FORMAT
%% Please check that the format of the bibliography is uniform and coherent

\end{document}